\documentclass[a4paper,10pt]{amsart}
\usepackage{latexsym}
\usepackage[thinlines]{easytable}
\usepackage{amsmath,mathtools,amsfonts,amssymb}
\usepackage{mathrsfs}
\usepackage{hyperref}
\usepackage[retainorgcmds]{IEEEtrantools}
\usepackage{enumerate}
\usepackage{enumitem}
\usepackage{amsthm}
\usepackage{pgf}
\usepackage{fullpage}
\usepackage{graphicx}
\usepackage{caption}
\usepackage{color}
\usepackage{float}
\usepackage{qtree}
\usepackage{subfigure}
\usepackage[final]{microtype}
\usepackage{graphicx}
\usepackage{verbatim}
\usepackage{tikz}
\usepackage{tikz-qtree}
\usetikzlibrary{arrows,automata,positioning}

\theoremstyle{definition} \newtheorem{definition}{Definition}[section]
\theoremstyle{plain} \newtheorem{theorem}[definition]{Theorem}
\theoremstyle{plain} \newtheorem{corollary}[definition]{Corollary}
\theoremstyle{plain} \newtheorem{lemma}[definition]{Lemma}
\theoremstyle{definition} \newtheorem{remark}[definition]{Remark}
\theoremstyle{plain}
\newtheorem{proposition}[definition]{Propostion}
\theoremstyle{plain} \newtheorem{claim}[definition]{Claim}
\theoremstyle{plain} \newtheorem{conjecture}[definition]{Conjecture}
\theoremstyle{definition}
\newtheorem{example}[definition]{Example}

\newcommand{\Skip}[3]{#1#2 \ldots#2 #3}

\newcommand{\ltilde}[1]{\overset{\sim}{#1}}

\newcommand{\gen}[1]{\langle #1 \rangle}

\newcommand{\T}[1]{\mathcal{#1}}

\newcommand{\xn}[1]{X_{#1}}
\newcommand{\xnp}[1]{\xn{#1}^{+}}
\newcommand{\xns}[1]{\xn{#1}^{*}}
\newcommand{\xnl}[1]{\xn{n}^{#1}}
\newcommand{\xnn}{X_{n}^{\N}}
\newcommand{\xnz}{X_{n}^{\Z}}

\newcommand{\pn}[1]{\mathcal{P}_{#1}}
\newcommand{\spn}[1]{\widetilde{\T{P}}_{#1}}
\newcommand{\hn}[1]{\mathcal{H}_{#1}}
\newcommand{\shn}[1]{\widetilde{\T{H}}_{#1}}
\newcommand{\wequal}{=_{\omega}}
\newcommand{\nwequal}{\ne_{\omega}}

\newcommand{\core}{\mathrm{Core}}
\newcommand{\coredist}{\mathrm{Core Dist}}

\newcommand{\Z}{\mathbb{Z}}
\newcommand{\N}{\mathbb{N}}

\newcommand{\dual}[1]{#1^{\vee}}

\DeclarePairedDelimiter{\ceil}{\lceil}{\rceil}
\DeclarePairedDelimiter{\floor}{\lfloor}{\rfloor}

\makeatletter
\renewcommand*{\eqref}[1]{%
	\hyperref[{#1}]{\textup{\tagform@{\ref*{#1}}}}%
}
\makeatother

\begin{document}

\author{
  Olukoya, Feyishayo\\
 Department of Mathematics,\\
		University of Aberdeen, \\ 
		Fraser Noble Building,\\ 
		Aberdeen,\\
    \texttt{feyisayo.olukoya@abdn.ac.uk}
}
\title{The core growth of strongly synchronizing transducers}

\markboth{F. Olukoya}
{The core growth of strongly synchronizing transducers}

\begin{abstract}
	We introduce the notion of `core growth rate' for strongly synchronizing transducers. We explore some elementary properties of the core growth rate and give examples of transducers with exponential core growth rate.  We conjecture that all strongly synchronizing transducers which generate an automaton group of infinite order have exponential core growth rate.  There is a connection to the group of automorphisms of the one-sided shift. More specifically, the results of this article are related to the question of whether or not there can exist infinite order automorphisms of the one-sided shift with infinitely many roots.
\end{abstract}

\keywords{Transducers; growth.}

\maketitle


\section{Introduction}
The growth of groups and semigroups has received a lot of attention since Milnor first posed the question of the existence of a group of intermediate growth and the very first example of such a group was given by Grigorchuk in 1968 \cite{Grigorchuk68}. In particular, as all known examples of groups of intermediate growth are automaton groups (that is groups generated by Mealy-automata), the question of determining the growth rate of a group generated by an automaton is an important one.  

This question has been considered for certain classes of automaton. More specifically, the paper \cite{IKlimann} shows that whenever the automaton group of a \emph{bireversible automaton} has an element of infinite order, then the growth rate of the group is exponential. Whilst the paper  \cite{OlukoyaOrder} shows that in the class of \emph{strongly synchronizing  automaton}, generalizing the class of \emph{reset automaton} as studied in \cite{SilvaSteinberg}, the automaton group generated by such an automaton has exponential growth. The paper \cite{TGodin} considers this question for generic automata.

In this article we introduce a new notion of growth for strongly synchronizing automaton which we call the \emph{core growth rate}. We recall that a mealy automaton is said  to be strongly synchronizing (\cite{BCMNO}) if there is a number $k$, the \emph{synchronizing level}, such that when an input of length $k$ is processed from any state, the resulting state depends only on the input word. The class of strongly synchronizing automaton is closed under automaton products, moreover, the set of states which are reached by all input words longer than the synchronizing level of such an automaton, form a sub-automaton called the \emph{core}. A strongly synchronizing automaton is called \emph{core} if it is equal to its core. 

The core growth rate measures how the core of a strongly synchronizing  automaton grows with powers of the automaton. For an automaton, the growth rate of the group or semigroup it generates is connected to the growth of the number of states, thus in general the core growth rate is less than the growth rate of the group or semigroup generated by a strongly synchronizing automaton. 

There is a connection to the group of automorphisms of the one-sided shift.  In the forthcoming paper \cite{BleakCameronOlukoya1} and also in the author's PhD thesis \cite{Olukoya2018}, it is shown that automorphisms of the one-sided shift of $n$ letters, $n \ge 2$, correspond in a natural way to invertible strongly synchronizing automaton which are core and whose inverses are also strongly synchronizing and core. In particular, it is shown that such automata naturally induce automorphisms of the one-sided shift on $n$-letters. Moreover, the map induced by the core of the resulting automata obtained by taking the automaton product of two core strongly synchronizing automata $A$ and $B$, is equal to the composition of the maps induced by $A$ and by $B$. Thus for each element of the automorphism of the one-sided shift, there is a smallest core strongly synchronizing automaton which represents this automorphism. The core growth rate then  counts the minimal amount of  `combinatorial data'  required to represent powers of this automorphism.  It is an open question in  \cite{BoyleLindRudolph} if there is an element of the automorphism of the two-sided shift of infinite order with an $k$\textsuperscript{th} root for every $k \in \N$, $k \ge 1$. The results of this paper have bearing on the analogous question for the one-sided shift. Since if all elements of infinite order in the group of automorphisms of the one-sided shift on $n$-letters have exponential core growth rate, then there can be no element with a $k$\textsuperscript{th} root for infinitely many natural numbers $k$.

Our main result demonstrates that for every $n \in \N$, $n \ge 2$, there are invertible strongly synchronizing automaton with exponential core growth rate. We also show that under certain conditions, that the core growth rate of a strongly synchronizing automaton is at least polynomial. We conjecture that it is in fact the case that any invertible strongly synchronizing automaton, which generates an infinite group, has core  exponential core growth rate and moreover the size of the $i$th power of the core always exceeds $i$ (this latter condition controls the size of the powers of the core for small $i$).

 The paper is organised as follows: in Section~\ref{preliminaries} we introduce the terminology, notations and results we will make use of in the  article, in Section~\ref{main} we prove the main result as well as explore some elementary properties of the core growth rate.

\section{Preliminaries} \label{preliminaries}
In this section we introduce the terminology and results that we will require.

Throughout this article, $n \in \N$ will be an natural number greater than or equal to $2$, and  $\xn{n} = \{0,1,\ldots, n-1\}$ an alphabet of cardinality $n$. We write $\xnp{n}$ for the set of all finite, non-empty words over the alphabet $\xn{n}$, we write $\epsilon$ for the empty word and $\xns{n}$ for the set $\xnp{n} \sqcup \{ \epsilon\}$. For $k \in \N$, $\xnl{k}$ denotes the set of all words of length exactly $k$. We have a map $| \cdot | : \xns{n} \to \N$ which returns the length of a word. 

We write $\xnn$ and $\xnz$ for, respectively, the set of all right-infinite and bi-infinite words over the alphabet $\xn{n}$. Equipping $\xn{n}$ with the discrete topology, and taking the product topology on $\xnn$ and $\xnz$ makes these spaces homeomorphic Cantor space.

We shall be concerned with groups of homeomorphisms and monoids of continuous maps of the spaces $\xnn$ and $\xnz$ which may be described by finite state machines that we introduce below.

\begin{definition}
	In our context a \emph{transducer} $A$ is a tuple $A = \gen{X_n, Q_{A}, \pi_{A}, \lambda_{A}}$ where:
	
	\begin{enumerate}[label = (\arabic*)]
		\item $X_n$ is both the input and output alphabet.
		\item $Q_{A}$ is the set of states of $A$.
		\item $\pi_A$, the \emph{transition} function, is a map:
		\[
		\pi: X_n\sqcup\{\epsilon\} \times Q_A \to Q_A
		\]
		\item $\lambda_A$, the \emph{output} or \emph{rewrite} function, is a map:
		\[
		\lambda_A: X_n\sqcup\{\epsilon\} \times Q_A \to X_n^{\ast}
		\]
	\end{enumerate}
\end{definition}

We take the convention that $\pi_{A}(\epsilon, q) = q$ for any $q \in Q_A$, and also $\lambda_A(\epsilon, q) = \epsilon$. 
If $|Q_A| < \infty$ then we say the transducer $A$ is finite. The transducer $A$  is called \emph{synchronous} or a \emph{Mealy automaton} if $\lambda_A$ also satisfies, $|\lambda(x,q)| = |x|$ for any $x \in X_n$ and $q \in Q_A$.  We shall only be concerned with synchronous transducers in this work, thus we abbreviate Mealy-automaton to automaton and shall frequently interchange the words transducer and automaton.

Let $q \in Q_A$ be a state, then we say $A$ is \emph{initialised at $q$} if all inputs are processed from the state $q$ and we write $A_{q}$ to denote this. The transducer $A_{q}$ is then called an \emph{initial transducer}.

We extend the domain  of $\pi$ and $\lambda$ to $X_n^{\ast} \times Q_A$ using induction and the rules:

\begin{IEEEeqnarray}{rCl}
	\pi_A( \Gamma x, q) &=& \pi_A(x, \pi_A(\Gamma,q)) \\
	\lambda_A( \Gamma x, q) &=& \lambda_A(\Gamma,q) \lambda_A(x,\pi_A(\Gamma,q)) 
\end{IEEEeqnarray}

where $\gamma \in X_n^{\ast}$, $x \in X_n$  and  $q \in Q_A$. 

For a word in $\Gamma \in X_n^{\ast}$ and states $q, p \in Q_A$, the phrase \emph{ read $\Gamma$ from state $q$ into $p$} or variations of this phrase, means precisely that  $\pi_A(\Gamma, q) = p$. If we additionally say that \emph{ the output is $\Delta$} then we mean  $\Delta = \lambda_A(\Gamma, q)$. 

Each state $q \in Q_A$ induces a continuous map from Cantor space $X_n^{\mathbb{N}}$ to itself. If this map is a homeomorphism then we say that $q$ is a \emph{homeomorphism state}. Two states $q_1$ and $q_2$ are then said to be $\omega$-equivalent if they induce the same continuous map. (This is can be checked in finite time.) A transducer, therefore, is called \emph{minimal} if no two states are $\omega$-equivalent. Two minimal transducers, $A = \gen{X_n, Q_A, \pi_A, \lambda_A}$ and $B= \gen{X_n, Q_B, \pi_B, \lambda_B}$, are said to be $\omega$-equivalent if there is a bijection $f: Q_A \to Q_B$ such that $q$  and $f(q)$ induce the same continuous map for $q \in Q_A$. In the case where $A$ and $B$ are $\omega$-equivalent then we write $A \wequal B$, otherwise we write $A \nwequal B$. We shall be concerned with various groups and monoids whose elements are $\omega$-equivalence classes of transducers, however, for convenience, we shall introduce these objects as though their elements were transducers. In particular, we often do not distinguish between the $\omega$-equivalence class of a transducer and its representative.

A synchronous transducer $A$ is said to be \emph{invertible (in the automaton theoretic sense)} if all the states of $A$ are homeomorphism states. Equivalently, $A$ is invertible if for every state $q \in Q_A$ the map $\lambda(\cdot, q): \xn{n} \to \xn{n}$ is a bijection. In this case the inverse of $A$ is the transducer $A^{-1}:= \gen{ Q_{A}^{-1}, \xn{n}, \pi_{A ^{-1}, \lambda_{A}^{-1}}$ where $Q_{A}^{-1}:= \{ q^{-1} \vert q \in Q_A \}$ and for $x \in \xn{n}$ and $p,q \in Q_{A}$, $\pi_{A}^{-1}(x, q^{-1}) = p^{-1}$ and $\lambda_{A}^{-1}(x, q^{-1}) = y$ if and only if $\pi_{A}(y,q) = p$ and $\lambda_{A}(y,q) = x$. We shall shortly introduce another sense in which a synchronous transducer is invertible.

Given two transducers $A = \gen{X_n, Q_A, \pi_A, \lambda_A}$ and $B= \gen{X_n, Q_B, \pi_B, \lambda_B}$, the product $A*B$  shall be defined in the usual way. The set of states of $A*B$ will be $Q_A \times Q_B$, and the transition and rewrite functions, $\pi_{A*B}$ and $\lambda_{A*B}$ of $A*B$ are defined by the rules:

\begin{IEEEeqnarray}{rCl}
	\pi_{A*B}(x, (p,q)) &=& (\pi_{A}(x,p), \pi_B(\lambda_A(x,p),q)) \\
	\lambda_{A*B}(x,(p,q)) &=& \lambda_{B}(\lambda_A(x,p),q)
\end{IEEEeqnarray}

Where $x \in X_n\sqcup{\epsilon}$, $p \in Q_A$ and $q \in Q_B$. For $I \in \N$, $i \ge 1$, $A^{i} = A_{1} \ast A_{2}\ast \ldots \ast A_{i}$ where $A_j = A$ for all $1\le j \le i$ , and $A^{-i} = (A^{-1})^{i}$. If $A:= \gen{X_n, Q_A, \lambda_A, \pi_{A}}$ then we shall set $A^{i} = \gen{X_n, Q_A^{i}, \lambda_{Ai}, \pi_{Ai}}$. 

We are also be interested in the do called \emph{dual automaton}. Let $A$ be synchronous transducer, then the dual automaton of $A$ is the automaton $\dual{A} = \gen{Q_{A}, \xn{n}, \pi_{\dual{A}}, \lambda_{\dual{A}}}}$ with state set $\xn{n}$, alphabet set $Q_A$, and transition and output function defined by $\pi_{\dual{A}}(q,x) = y$ if and only if $\lambda_{A}(x,q) = y$ and $\lambda_{\dual{A}}(q,x) = p$ if and only if $\pi_{A}(x,q) = p$.

If $A = \gen{X_n, Q_A, \pi_A, \lambda_A}$ is a synchronous transducer, then as each state $q$ of $A$ induces a continuous function of $X_n^{\mathbb{Z}}$ we may consider the subsemigroup (or group in the case that $A$ is invertible) of the endomorphisms of $X_n^{\Z}$ generated by the set $\{A_{q} | q \in Q_{A}\}$. We refer to this semigroup or group as the \emph{automaton semigroup or automaton group generated by $A$}. We also consider the monogenic semigroup $\gen{A} = \{A^i | i \in \mathbb{N}\}$ or cyclic group $\gen{A} = \{A^i | i \in \Z \}$, and call these the \emph{the semigroup or  group generated by $A$}. Whenever there is any ambiguity we shall make it explicit that $\gen{A}$ refers either to  the semigroup or group generated by $A$.

The following definition is from the paper \cite{BCMNO} and it is with these class of transducers this work will be concerned.

\begin{definition}
	Given a non-negative integer $k$ and an automaton $A=\gen{X_n,Q_A,\pi_A,\lambda_A}$, we say that \emph{$A$ is synchronizing at level $k$} or \emph{strongly synchronizing} if there is a map $\mathfrak{s}:X_n^k\to Q$, so that for all $q\in Q_A$ and any word $\Gamma\in X_n^k$ we have $\mathfrak{s}(\Gamma)=\pi(\Gamma,q)$. That is, the location in the automaton is determined by the last $k$ letters read. We call $\mathfrak{s}$ the synchronizing map for $A$, the image of the map $\mathfrak{s}$ the \emph{core of $A$}, and for a given $\Gamma \in X_n^k$, we call $\mathfrak{s}(\Gamma)$ the \emph{state of $A$ forced by $\Gamma$}. If $A$ is invertible, and $A^{-1}$ is strongly synchronizing at some level $0\le l \in \mathbb{N}$, then  we say that \emph{$A$ is bi-synchronizing at level $\max(k,m)$}. If $A$ is strongly synchronizing but not bi-synchronizing then we shall say $A$ is \emph{one-way synchronizing}.
\end{definition}

\begin{remark}
	\begin{enumerate}[label=(\arabic*)]
		\item It is an easy observation that for a strongly synchronizing transducer the core of $A$ is a strongly synchronizing transducer in its own right. We denote this transducer by $\core(A)$. If $A = \core(A)$, then we say that \emph{$A$ is core}.
		\item For a synchronous, strongly synchronizing transducer, $A$, $\core(A)$ induces a shift-commuting map from $X_n^{\mathbb{Z}}$ to itself. This map $f_{A}$ is defined as follows: given a bi-infinite string $(x_i)_{i \in \mathbb{Z}}$, then $(x_i)f_{A} = \pi(x_i,\mathfrak{s}(x_{i-k}\ldots x_{i-1}))$.  
	\end{enumerate}
\end{remark}
The following straight-forward lemma can be found in the paper \cite{OlukoyaOrder}.

\begin{lemma} \label{claim-SyncLengthsAdd}
	Let $A = \gen{X_n,Q_A,\pi_{A},\lambda_{A}}$ and $B = \gen{X_n,Q_{B},\pi_B,\lambda_{B},}$ be synchronous automata synchronizing at levels $j$ and $k$ respectively. Then $A*B$ is synchronizing at level $j+k$.
\end{lemma}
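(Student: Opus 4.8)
The plan is to argue directly from the defining formula for the transition function of the product transducer, $\pi_{A*B}(x,(p,q)) = \bigl(\pi_A(x,p),\,\pi_B(\lambda_A(x,p),q)\bigr)$, together with a single split of the input word. Fix a word $\Gamma \in X_n^{j+k}$ and factor it as $\Gamma = \Gamma_1\Gamma_2$ with $|\Gamma_1| = j$ and $|\Gamma_2| = k$; let $\mathfrak{s}_A$ and $\mathfrak{s}_B$ denote the synchronizing maps of $A$ and $B$. Given an arbitrary state $(p,q) \in Q_A \times Q_B$, I would compute $\pi_{A*B}(\Gamma,(p,q))$ by first reading $\Gamma_1$ and then reading $\Gamma_2$, using that the extended transition function of $A*B$ is compatible with this factorisation (this compatibility is immediate from the inductive rules for $\pi$ applied to the single-letter formula for $\pi_{A*B}$, and I would only remark on it rather than verify it in detail).

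For the first step, reading $\Gamma_1$ from $(p,q)$ lands in the state $\bigl(\pi_A(\Gamma_1,p),\,\pi_B(\lambda_A(\Gamma_1,p),q)\bigr)$. Because $A$ is synchronizing at level $j$ and $|\Gamma_1| = j$, the first coordinate equals $\mathfrak{s}_A(\Gamma_1)$, independently of $p$; write $q'$ for the second coordinate, which in general still depends on both $p$ and $q$. For the second step, read $\Gamma_2$, of length $k$, starting from $\bigl(\mathfrak{s}_A(\Gamma_1),q'\bigr)$. The first coordinate becomes $\pi_A(\Gamma_1\Gamma_2,p) = \mathfrak{s}_A(\Gamma)$, again since $|\Gamma| = j+k \ge j$. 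For the second coordinate, the word handed to $B$ is $w := \lambda_A\bigl(\Gamma_2,\mathfrak{s}_A(\Gamma_1)\bigr)$; since $A$ is synchronous we have $|w| = |\Gamma_2| = k$, and $w$ depends only on $\Gamma_2$ and on $\mathfrak{s}_A(\Gamma_1)$, hence only on $\Gamma$. As $B$ is synchronizing at level $k$, reading the length-$k$ word $w$ from $q'$ forces the state $\mathfrak{s}_B(w)$, independently of $q'$. Thus $\pi_{A*B}(\Gamma,(p,q)) = \bigl(\mathfrak{s}_A(\Gamma),\,\mathfrak{s}_B(w)\bigr)$ depends only on $\Gamma$, and defining $\mathfrak{s}_{A*B}(\Gamma)$ to be this pair exhibits $A*B$ as synchronizing at level $j+k$.

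There is no genuine obstacle here: the whole argument is bookkeeping with the product formula. The one point I would flag explicitly — and the reason the level comes out as $j+k$ rather than $\max(j,k)$ — is that the $B$-component of the product reads letters \emph{manufactured} by the $A$-component, so one must first spend $j$ input letters to pin down the $A$-state, and only after that can the subsequent $k$ input letters be relied upon to pin down the $B$-state. An analogous induction over the number of factors then gives the corresponding statement for an arbitrary finite product, which is what later sections require.
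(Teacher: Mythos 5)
Your argument is correct: splitting $\Gamma$ into a length-$j$ prefix that pins down the $A$-coordinate and a length-$k$ suffix whose $A$-output (a word of length $k$ depending only on $\Gamma$, by synchronicity of $A$) then pins down the $B$-coordinate is exactly the standard proof of this fact. The paper itself gives no proof, merely citing the result from \cite{OlukoyaOrder}, so there is nothing to compare against; your write-up supplies the expected argument, and the one point you flag (that the $B$-component consumes letters manufactured by $A$, which is why the levels add rather than take a maximum) is indeed the only content of the lemma.
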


Let $\widetilde{\mathcal{P}}_{n}$ be the set of core, strongly, synchronizing, synchronous transducers. Define a product from $\spn{n}$ to itself by  $(A, B) \mapsto \min{\core(A \ast B)}$ where $\min{\core(A\ast B)}$ is the minimal transducer representing the core of the product of $A$ and $B$. Since the operations of minimising and reducing to the core commute with each other, the order in which we perform these operations is irrelevant.

It is a result in the forthcoming paper \cite{BleakCameronOlukoya1} that the monoid $\spn{n}$ is isomorphic to a submonoid of the monoid of endomorphisms of the shift dynamical system which, together with negative powers of the shift map generates the endomorphisms of the shift dynamical system. 
  
Let $\pn{n}$ be the submonoid of $\spn{n}$ consisting of those elements of $\spn{n}$ which  induce homeomorphisms of $X_n^{\mathbb{Z}}$. The group $\hn{n}$ is the subset of $\pn{n}$ consisting of those transducers $H$ which have an automaton theoretic inverse (this inverse is in fact again in $\pn{n}$).  It is a result in the forthcoming paper \cite{BleakCameronOlukoya1} and also in the author's PhD thesis \cite{Olukoya2018}, that $\hn{n}$ is isomorphic to the group of automorphisms of the one-sided shift on $n$ letters. Finally define $\shn{n}$ to be those elements of $\spn{n}$ which have  an automaton theoretic inverse (note that this inverse is not always again an element of $\spn{n}$).

The paper \cite{OlukoyaOrder} associates to an element $A \in \spn{n}$ finite graphs $G_{r}(A)$ for every $r \in \N$ greater than or equal to the minimal synchronizing level of $A$ whose vertices are subsets of $Q_{A} \times Q_{A}$. For a given  $r \in \N$ greater than or equal to the minimal synchronizing level of $A$, the graph $G_{r}(A)$ is called the \emph{graph of bad pairs of $A$ (at level $r$)}.

The following result is from \cite{OlukoyaOrder}:

\begin{proposition} \label{infinite=expgrowth1}
	Let $A \in \widetilde{\T{H}}_{n}$ and suppose that $A$ is synchronizing at level $k$ and is minimal. Let $G_{j}(A)$ be the graph of bad pairs for some $j \ge k \in \N$. Suppose there is a subset $\T{S}$ of the set of states of $A$, such that the following things hold:
	\begin{enumerate}[label=(\arabic*)]
		\item $|\T{S}| \ge 2$,
		\item the set $\T{S}(2)$ of two element subsets of $\T{S}$ is a subset of the vertices of $G_{j}(A)$,
		\item for each element of $\T{S}(2)$ there is a vertex  accessible from it which belongs to a circuit.
	\end{enumerate}
	Then the automaton semigroup generated by $A$ contains a free semigroup of rank at least $|\T{S}|$. In particular the automaton semigroup generated by $A$ has exponential growth.
\end{proposition}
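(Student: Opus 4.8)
The plan is to show that the $m := |\T{S}| \ge 2$ states making up $\T{S} = \{s_{1}, \ldots, s_{m}\}$ freely generate a subsemigroup of the automaton semigroup generated by $A$ --- equivalently, that the map $w \mapsto A_{w}$ from the free semigroup on $\{s_{1}, \ldots, s_{m}\}$ into the automaton semigroup is injective. Granting this, the ``in particular'' clause is immediate: with respect to the standard generating set $\{A_{q} : q \in Q_{A}\}$, the ball of radius $\ell$ already contains the $m^{\ell}$ pairwise distinct transformations of $\xnn$ represented by the length-$\ell$ words over $s_{1}, \ldots, s_{m}$, so the growth function of the semigroup dominates $\ell \mapsto m^{\ell}$, which is exponential since $m \ge 2$.

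So everything reduces to showing that distinct positive words $u \ne v$ over $s_{1}, \ldots, s_{m}$ induce distinct transformations of $\xnn$. First I would cancel the longest common prefix of $u$ and $v$: since $A \in \shn{n}$, every $A_{q}$, hence every product $A_{w}$, is a homeomorphism of $\xnn$, so $A_{u} = A_{v}$ is equivalent to the equality of transformations obtained after deleting that prefix. Thus I may assume $u = s_{i} u''$ and $v = s_{j} v''$ with $s_{i} \ne s_{j}$ in $\T{S}$. (If instead one trimmed word is empty, one is comparing a nonempty positive word with the identity map; this is handled by the same device used below, and I omit it.)

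The substance is then to produce an input $\xi \in \xnn$ on which the output sequences of $A_{u}$ and $A_{v}$ disagree. This is where the hypotheses enter. By (2), the pair $\{s_{i}, s_{j}\} \in \T{S}(2)$ is a vertex of $G_{j}(A)$; by (3), there is a vertex lying on a circuit $C$ of $G_{j}(A)$ that is accessible from $\{s_{i}, s_{j}\}$. Reading the edge labels, I would take a finite input that drives the ``graph of bad pairs'' bookkeeping from $\{s_{i}, s_{j}\}$ onto $C$, followed by the input traversing $C$; iterating the latter gives an eventually periodic $\xi$. The graphs $G_{j}(A)$ of \cite{OlukoyaOrder} are built --- their vertices being \emph{subsets} of $Q_{A} \times Q_{A}$ --- precisely so that, as $\xi$ is read, a vertex records the set of pairs of $A$-states occupied by the two parallel computations defining $A_{u}$ and $A_{v}$, and a vertex on a circuit certifies that the output disagreement forced at the head by the bad pair $\{s_{i}, s_{j}\}$ is never absorbed: it propagates along the computation and recurs on every pass around $C$. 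Hence $A_{u}(\xi)$ and $A_{v}(\xi)$ differ at infinitely many coordinates, so $A_{u} \ne A_{v}$.

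The step I expect to be the main obstacle is exactly this propagation: checking that a disagreement created by the leading states $s_{i}, s_{j}$ genuinely survives post-composition by the tails $u''$ and $v''$ --- distinct words, a priori of different lengths --- rather than being cancelled by them. Controlling this is what the machinery of the graphs $G_{j}(A)$ is designed for: it tracks, letter by letter along an input, the evolving set of state-pairs of the two computations, and circuits are the pumpable certificates that this set cannot collapse to ``agreement''. A secondary, more routine point is to confirm that traversing $C$ infinitely often keeps producing fresh disagreements --- that one never re-enters a configuration on which the two outputs re-synchronise --- which is why hypothesis (3) insists on a genuine circuit rather than merely a long accessible path.
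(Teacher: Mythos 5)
You should first be aware that this paper contains no proof of Proposition~\ref{infinite=expgrowth1} to compare against: it is imported verbatim from \cite{OlukoyaOrder} (``The following result is from \cite{OlukoyaOrder}''), and even the definition of the graphs $G_{j}(A)$ is only alluded to here. So your proposal can only be judged on its own terms, and on those terms it has a genuine gap, which is in fact the one you flag yourself. Your reduction is fine as far as it goes: since $A \in \shn{n}$, every state is a homeomorphism state, so with the paper's composition convention a common prefix $w$ contributes a precomposition by the homeomorphism $A_{w}$ and may be cancelled, and the prefix-of-a-prefix case can be pushed back into the ``distinct first letters'' case (though you should say how; it needs $|\T{S}|\ge 2$ to replace the comparison ``$A_{v''}$ versus the identity'' by a comparison of two words with distinct leading generators). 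The problem is the step you call ``propagation''. You assert that a vertex of $G_{j}(A)$ ``records the set of pairs of $A$-states occupied by the two parallel computations defining $A_{u}$ and $A_{v}$''. It cannot: the paper tells you the vertices of $G_{j}(A)$ are subsets of $Q_{A}\times Q_{A}$, so the graph is an object built from $A$ alone, whereas the two computations you need to separate run through state-tuples of $A^{|u|}$ and $A^{|v|}$, words of a priori different lengths over $Q_{A}$. A circuit accessible from $\{s_{i},s_{j}\}$ can certify that the two \emph{states} $s_{i}$ and $s_{j}$ of $A$ produce outputs that disagree infinitely often along some eventually periodic input $\xi$; it says nothing, by itself, about why the subsequent machines $A_{u''}$ and $A_{v''}$ --- two \emph{different} homeomorphisms applied to two \emph{different} streams --- cannot map those streams to the same point of $\xnn$. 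Ruling that out is the entire content of the proposition, and no argument is offered for it beyond the statement that the machinery is ``designed for'' it.

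Concretely: from $\lambda(\xi,s_{i}) \ne \lambda(\xi,s_{j})$ (even with infinitely many disagreements) you cannot conclude $A_{u''}\bigl(\lambda(\xi,s_{i})\bigr) \ne A_{v''}\bigl(\lambda(\xi,s_{j})\bigr)$, because injectivity of a single map is irrelevant when the two sides carry different maps. Any honest proof must track a quantity attached to the \emph{pair of words} $(u,v)$ --- not just the pair of leading states --- and show it is preserved under appending generators and under reading the pumped input; that is presumably what the bad-pair formalism of \cite{OlukoyaOrder} does, but your sketch does not reconstruct it. Until that inductive bookkeeping is supplied, what you have is a correct statement of the strategy and of its main obstacle, not a proof.
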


For each $k \in \N$, there is a homomorphism (\cite{OlukoyaOrder}) from the set $\spn{n}$ to the  monoid of transformations of $\xnl{k}$ defined as follows. For $A \in \spn{n}$ let $\overline{A}_{k}$ be the transformation of $\xnl{k}$ defined by: $x \mapsto y$ where $y$ is given by $\lambda_{A}(x, q_x)$ for $q_x$ the unique state of $A$ for which $\pi_{A}(x, q_x) = q_x$.

\section{Growth rates of the core of elements of \texorpdfstring{$\spn{n}$}{lg}}\label{main}
\begin{definition}[Core growth rate]
 Let $A \in \T{P}_{n}$ be an automaton, and let $\chi$ be one of `logarithmic', `polynomial', and, `exponential', then we say that $A$ has \emph{core $\chi$ growth (rate)} if the core of the minimal representative of powers of $A$ grows at a rate $\chi$ with powers of $A$.
 
\end{definition}

The lemma below, which is from the paper \cite{OlukoyaOrder}, indicates that there are many examples of elements of $\spn{n}$ and $\shn{n}$, $n \in \mathbb{N}$ and $n \ge 2$ which have core exponential growth.

\begin{lemma}\label{core=power}
	Let $A = \gen{X_n,Q,\pi,\lambda} \in \widetilde{\T{P}}_{n}$ be a transducer, which is synchronizing at level $k$. Furthermore assume that for every $\Gamma \in X_n^k$ and for all states $q \in Q$, there is a state $p \in Q$ such that $\lambda(\Gamma,p) \in W_{q}$. Then under this condition, $A$ has the property that for all $m \in \mathbb{N}$, $Core(A^m) = A^m$.
\end{lemma}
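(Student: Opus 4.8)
My plan is to reduce the statement to the surjectivity of the synchronizing map of $A^m$ and to prove that by induction on $m$. Recall that for $q \in Q$ the set $W_q$ consists of the words of length $k$ that force $q$, so that for $\Gamma \in X_n^k$ one has $\Gamma \in W_q$ if and only if $\mathfrak{s}(\Gamma) = q$; since $A$ is core, $\mathfrak{s} : X_n^k \to Q$ is onto and every $W_q$ is non-empty. By Lemma~\ref{claim-SyncLengthsAdd}, $A^m$ is synchronizing at level $mk$, it has state set $Q^m$, and its core is the sub-transducer induced on the image of its level-$mk$ synchronizing map $\mathfrak{s}_m : X_n^{mk} \to Q^m$. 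Hence $\core(A^m) = A^m$ is equivalent to $\mathfrak{s}_m$ being surjective, and I would prove this by induction on $m$, the case $m = 1$ being exactly the hypothesis that $A$ is core.

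The first real step is an explicit formula for $\mathfrak{s}_m$. Write an input word $\Gamma \in X_n^{mk}$ as a concatenation of length-$k$ blocks $\Gamma = \Gamma_1 \Gamma_2 \cdots \Gamma_m$, and define length-$k$ words $\Delta_j^{(t)}$, for $1 \le j \le m$ and $0 \le t \le j-1$, by $\Delta_j^{(0)} := \Gamma_j$ and $\Delta_j^{(t)} := \lambda(\Delta_j^{(t-1)}, \mathfrak{s}(\Delta_{j-1}^{(t-1)}))$. Tracking $\Gamma$ through the chain $A^m = A_1 \ast \cdots \ast A_m$ — using that once a copy has consumed its first $k$ input letters, its state, and hence all its later output letters, no longer depends on where that copy started, that the $j$-th block ($j \ge 2$) of the output of $A_1$ on $\Gamma$ is therefore independent of the start state and equals $\Delta_j^{(1)} = \lambda(\Gamma_j, \mathfrak{s}(\Gamma_{j-1}))$, and then iterating via $A^m = A \ast A^{m-1}$ — I would verify that copy $A_r$ ends in the state $\mathfrak{s}(\Delta_m^{(r-1)})$, so that
\[
\mathfrak{s}_m(\Gamma_1 \cdots \Gamma_m) = \big( \mathfrak{s}(\Delta_m^{(0)}),\ \mathfrak{s}(\Delta_m^{(1)}),\ \ldots,\ \mathfrak{s}(\Delta_m^{(m-1)}) \big).
\]
Since for $j \le m-1$ the word $\Delta_j^{(t)}$ depends only on $\Gamma_1, \ldots, \Gamma_{m-1}$, the same formula at level $m-1$ reads $\mathfrak{s}_{m-1}(\Gamma_1 \cdots \Gamma_{m-1}) = \big( \mathfrak{s}(\Delta_{m-1}^{(0)}), \ldots, \mathfrak{s}(\Delta_{m-1}^{(m-2)}) \big)$; this compatibility is what lets the induction close.

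For the inductive step, assume $\mathfrak{s}_{m-1}$ surjective and fix a target $(p_1, \ldots, p_m) \in Q^m$; I want blocks $\Gamma_1, \ldots, \Gamma_m$ with $\mathfrak{s}(\Delta_m^{(t)}) = p_{t+1}$ for $0 \le t \le m-1$. First choose any $\Gamma_m \in W_{p_1}$, so $\Delta_m^{(0)} = \Gamma_m \in W_{p_1}$. Then for $t = 1, \ldots, m-1$ in turn, the word $\Delta_m^{(t-1)} \in X_n^k$ being already fixed, the hypothesis applied to $\Delta_m^{(t-1)}$ and the target state $p_{t+1}$ yields a state $r_t$ with $\lambda(\Delta_m^{(t-1)}, r_t) \in W_{p_{t+1}}$; imposing the constraint $\mathfrak{s}(\Delta_{m-1}^{(t-1)}) = r_t$ then determines $\Delta_m^{(t)} = \lambda(\Delta_m^{(t-1)}, r_t) \in W_{p_{t+1}}$. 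Now $r_1, \ldots, r_{m-1}$ (and the words $\Delta_m^{(0)}, \ldots, \Delta_m^{(m-1)}$) are pinned down without reference to $\Gamma_1, \ldots, \Gamma_{m-1}$, and by the compatibility remark the accumulated constraints amount to $\mathfrak{s}_{m-1}(\Gamma_1 \cdots \Gamma_{m-1}) = (r_1, \ldots, r_{m-1})$, which is solvable by the inductive hypothesis. Choosing such $\Gamma_1, \ldots, \Gamma_{m-1}$ and re-running the cascade reproduces exactly the $\Delta_m^{(t)}$ constructed above, so $\mathfrak{s}_m(\Gamma_1 \cdots \Gamma_m) = (p_1, \ldots, p_m)$; hence $\mathfrak{s}_m$ is surjective and $\core(A^m) = A^m$.

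The only genuinely delicate part is the second step: verifying the formula for $\mathfrak{s}_m$, and its compatibility with $\mathfrak{s}_{m-1}$, requires careful attention to which output letters of each copy in the chain have stabilised. Once that formula is available the induction is essentially forced — it strips off the target states one copy at a time, each time using the hypothesis to convert a requirement on the state of copy $A_r$ into a requirement on the length-$k$ word that copy $A_{r-1}$ emits, and finally absorbs all of these into a single application of the inductive hypothesis; the coreness of $A$ enters only through the base case $m = 1$ and the non-emptiness of $W_{p_1}$.
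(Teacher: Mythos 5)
Your proof is correct, but there is nothing in the paper to compare it against: the paper states Lemma~\ref{core=power} as a quotation from \cite{OlukoyaOrder} and supplies no proof (indeed it never even defines the notation $W_q$). Judged on its own merits, your argument is sound: the reduction of $\core(A^m)=A^m$ to surjectivity of the level-$mk$ synchronizing map of $A^m$, the cascade formula $\mathfrak{s}_m(\Gamma_1\cdots\Gamma_m)=\bigl(\mathfrak{s}(\Delta_m^{(0)}),\ldots,\mathfrak{s}(\Delta_m^{(m-1)})\bigr)$ (which correctly exploits that the $j$\textsuperscript{th} output block of copy $A_r$ is independent of initial states once $j\ge r+1$), and the backward-substitution induction that converts the target state of copy $A_{t+1}$ into a forcing condition on the state of copy $A_t$ are all valid. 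The one tacit step is your reading of $W_q$ as the set of length-$k$ words forcing $q$; that is the only interpretation under which the statement is sensible, and its non-emptiness is exactly the hypothesis that $A$ is core, which is where your base case lives.
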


 In particular the Cayley machine (see  \cite{SilvaSteinberg} for a definition) of any finite group. Notice moreover that Lemma~\ref{core=power} applies to transducers without homeomorphism states. The transducer in Figure \ref{shiftmaphascoreexpgrowthrate} is a non-minimal strongly synchronizing transducer whose action on $X_2^{\Z}$ induces the shift-homeomorphism. We call this the 2-shift transducer. This transducer satisfies the hypothesis of Lemma~\ref{core=power} and so has core exponential growth rate. 

\begin{figure}[H]
\begin{center}
 \begin{tikzpicture}[shorten >=0.5pt,node distance=3cm,on grid,auto] 
 \tikzstyle{every state}=[fill=none,draw=black,text=black]
    \node[state] (q_0)   {$a_1$}; 
    \node[state] (q_1) [right=of q_0] {$a_2$};  
     \path[->] 
     (q_0) edge [loop left] node [swap] {$0|0$} ()
           edge [bend left]  node  {$1|0$} (q_1)
     (q_1) edge [loop right]  node [swap]  {$1|1$} ()
           edge [bend left]  node {$0|1$} (q_0);
 \end{tikzpicture}
 \end{center}
 \caption{The shift map has core exponential growth rate.}
 \label{shiftmaphascoreexpgrowthrate}
 
\end{figure}
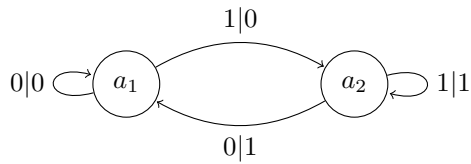

The transducer below, an element of $\shn{2}$, also satisfies Lemma~\ref{core=power}, and so has core exponential growth rate.

\begin{figure}[H]
	\begin{center}
		\begin{tikzpicture}[shorten >=0.5pt,node distance=3cm,on grid,auto] 
		\tikzstyle{every state}=[fill=none,draw=black,text=black]
		\node[state] (q_0)   {$a_1$}; 
		\node[state] (q_1) [right=of q_0] {$a_2$};  
		\path[->] 
		(q_0) edge [loop left] node [swap] {$0|1$} ()
		edge [bend left]  node  {$1|0$} (q_1)
		(q_1) edge [loop right]  node [swap]  {$1|1$} ()
		edge [bend left]  node {$0|0$} (q_0);
		\end{tikzpicture}
	\end{center}
	\caption{An element of $\shn{2}$ with core exponential growth rate.}
	\label{shntwocoreexp}
	
\end{figure}

If we restrict to $\hn{n}$, then it is a result due to Hedlund \cite{Hedlund} that $\hn{2}$ is the cyclic group of order 2. However using Lemma \ref{core=power} one can verify that the element $H$ of   $\hn{n}$, $n \ge 4$, shown in Figure \ref{coreexpgrowthexampleinh4} has core exponential growth rate.

\begin{figure}[H]
\begin{center}
 \begin{tikzpicture}[shorten >=0.5pt,node distance=3cm,on grid,auto] 
 \tikzstyle{every state}=[fill=none,draw=black,text=black]
    \node[state] (q_0)   {$a_1$}; 
    \node[state] (q_1) [right=of q_0] {$a_2$};  
     \path[->] 
     (q_0) edge [in=145,out=115,loop] node [swap] {$0|1,$} node [xshift=-0.2cm, yshift=0.62cm] {$x|x$} ()
           edge [in=180,out=210, loop] node  {$1|2$} ()
           edge [bend left]  node  {$2|0$,$3|3$} (q_1)
     (q_1) edge [out=15,in=45, loop]  node [swap]  {$2|3$} (q_0)
           edge [out=355, in=325,loop] node {$3|0$} ()
           edge [bend left]  node {$0|2$, $1|1$, $x|x$} (q_0);
 \end{tikzpicture}
 \end{center}
 \caption{An element of $\hn{4}$ with core exponential growth rate. Here $x \in \xn{n} \backslash\{0,1,2,3\}$.}
 \label{coreexpgrowthexampleinh4}
 
\end{figure}

  We have now shown that for $n \ge 4$ $\hn{n}$ contains elements with core exponential growth. This leaves $\hn{3}$.
 
  The transducer $G$ shown in Figure \ref{anelementofH3witcoreexponentialgrowth} is an element of $\hn{3}$, we shall show that this element has core exponential growth rate. Our argument for demonstrating this is somewhat convoluted.
 
\begin{figure}[H]
\begin{center}
 \begin{tikzpicture}[shorten >=0.5pt,node distance=3cm,on grid,auto] 
 \tikzstyle{every state}=[fill=none,draw=black,text=black]
    \node[state] (q_0)   {$b$}; 
    \node[state] (q_1) [right=of q_0] {$a$};  
     \path[->] 
     (q_0) edge [in=145,out=115,loop] node [swap] {$0|0$} ()
           edge [in=180,out=210, loop] node  {$2|1$} ()
           edge  node  {$1|2$} (q_1)
     (q_1) edge [out=130,in=45]  node [swap]  {$0|0$} (q_0)
           edge [loop right] node [swap] {$1|2$} ()
           edge [bend left]  node {$2|1$} (q_0);
 \end{tikzpicture}
 \end{center}
 \caption{An element of $\hn{3}$ with core exponential growth.}
 \label{anelementofH3witcoreexponentialgrowth}
\end{figure}
 
The graph of bad pairs of $G$  at level 1 has a loop it then it follows by Proposition \ref{infinite=expgrowth1} that the automaton semigroup generated by $G$ has exponential growth and is in fact a free semigroup. This means that different words in $\{a,b\}$ of the same length represent inequivalent states of some power of $G$. Since no reductions can be made, we will denote by $Core(G^i)$ the automaton representing the core of $G^i$ for some $i \in \mathbb{N}$.

Observe that $b^i$ is a state of $Core(G^i)$ for all $i \in \mathbb{N}$, since $\pi(0,b) = b$ and $G_b(0) = 0$. Therefore we can treat $G$ as an initial automaton with start state $b$. 

To keep the analysis simple we shall reduce to the case of a Mealy-automaton on a two letter alphabet which will serve as a `dummy' variable for $G$ in a sense that will be made precise. To do this, consider the binary tree in Figure \ref{fig: binary tree of transitions} representing how the initial transducer $G_{bb} := Core(G^2)$ transitions on certain inputs.  The left half of tree corresponds to transitions from the set $\{1\} \times \{0,2\} \times \{0,1\} \times \{0,2\} \times \{0,1\} \ldots$ the right half of the tree corresponds to transitions from the set $\{2\} \times \{0,1\} \times \{0,2\} \times \{0,1\} \times \{0,2\} \ldots$. Let $T_1 := \{0,2\} \times \{0,1\} \times \{0,2\} \times \{0,1\} \ldots$ and $T_2:= \{0,1\} \times \{0,2\} \times \{0,1\} \times \{0,2\} \ldots$.
\begin{figure}[H]
\begin{center}
  \begin{tikzpicture}[every tree node/.style={ }, level distance=1cm,sibling distance=0.25cm, edge from parent path={(\tikzparentnode) -- (\tikzchildnode)}]
  \Tree
  [.\node[draw,circle]{bb};
      \edge node[auto=right,pos=.5] {$1|1$};
      [.\node[draw,circle]{ab}; 
         \edge node[auto=right,pos=.6] {$0|2$};
         [.\node[draw,circle]{ba}; 
             \edge node[midway,left] {$0|1$};
             [.\node[draw,circle]{bb}; 
                 \edge node[midway,left] {$0|0$};
                 [.$\vdots$ ]
                 \edge node[midway,right]{$2|2$};
                 [.$\vdots$ ]
             ]
             \edge node[midway,right]{$1|0$};
             [.\node[draw,circle]{ab};
                 \edge node[midway,left] {$0|2$};
                 [.$\vdots$ ]
                 \edge node[midway,right]{$2|0$};
                 [.$\vdots$ ]             
              ]
         ]
         \edge node[auto=left,pos=.6] {$2|0$};
         [.\node[draw,circle]{bb}; 
              \edge node[midway,left] {$0|0$};
              [.\node[draw,circle]{bb};
                \edge node[midway,left] {$0|0$};
                [.$\vdots$ ] 
                \edge node[midway,right]{$2|2$};
                [.$\vdots$ ] 
               ]
              \edge node[midway,right]{$1|1$};
              [.\node[draw,circle]{ab};
                  \edge node[midway,left] {$0|2$};
                  [.$\vdots$ ]
                  \edge node[midway,right]{$2|0$};
                  [.$\vdots$ ]
               ]
         ]
          ]
      \edge node[auto=left,pos=.5] {$2|2$};
      [.\node[draw,circle]{ba}; 
          \edge node[auto=right,pos=.6] {$0|1$};
          [.\node[draw,circle]{bb}; 
              \edge node[midway,left] {$0|0$};
              [.\node[draw,circle]{bb}; 
                  \edge node[midway,left] {$0|0$};
                  [.$\vdots$ ]
                  \edge node[midway,right] {$1|1$};
                  [.$\vdots$ ]
              ]
              \edge node[midway,right]{$2|2$};
              [.\node[draw,circle]{ba}; 
                  \edge node[midway,left] {$0|1$};
                  [.$\vdots$ ]
                  \edge node[midway,right] {$1|0$};
                  [.$\vdots$ ]
              ]
          ]
          \edge node[auto=left,pos=.6] {$1|0$};
          [.\node[draw,circle]{ab}; 
              \edge node[midway,left] {$0|2$};
              [.\node[draw,circle]{bb}; 
                  \edge node[midway,left]{$0|0$};
                  [.$\vdots$ ]
                  \edge node[midway,right]{$1|1$};
                  [.$\vdots$ ]
              ]
              \edge node[midway,right]{$2|0$};
              [.\node[draw,circle]{ba};
                  \edge node[midway,left]{$0|1$};
                  [.$\vdots$ ]
                  \edge node[midway,right]{$1|0$};
                  [.$\vdots$ ]
               ]
          ]
          ]
  ]
  \end{tikzpicture} 
 \end{center}
 \caption{Binary tree depicting the transitions of $G_{bb}$.} 
 \label{fig: binary tree of transitions}
 \end{figure}
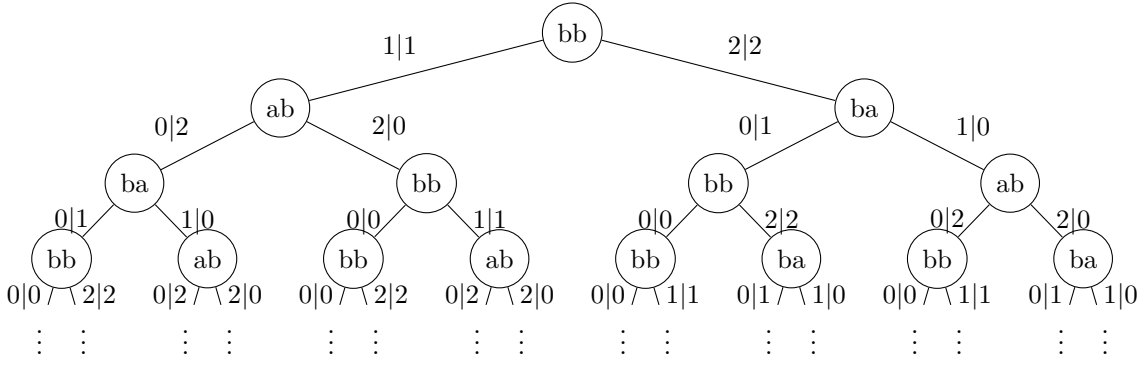
 
  Using Figure \ref{fig: binary tree of transitions} we form a dummy transducer which mimics the transitions of $G_{bb}$ as follows. We shall only be interested in the transitions of this dummy transducer and so whenever we take powers of the dummy transducer we will not minimise it. First form new states $B \sim bb$, $\sigma_1^1 \sim ab$, $\sigma_0^1 \sim ba$, $\sigma_0^0$  and $\sigma_1^0$. Here $\sigma_0^0 $ corresponds to the state $bb$ whenever we read an element of $\{0,1\}$ from $bb$ and $\sigma_1^0$ corresponds to the state $bb$ whenever we read an element of $\{0,2\}$ from $bb$. Now notice that all states on the left half of below the root, at odd levels map $\{0,2\}$ into $\{0,2\}$ and at all states at even levels map $\{0,1\}$ into $\{0,1\}$. Analogously all states on the right half of the tree below the root map $\{0,1\}$ into $\{0,1\}$ at odd levels and $\{0,2\}$ into $\{0,2\}$ at even levels. Since we only care about transitions we may transform the tree into a binary tree by replacing all the 2's with 1's so long as we still encode the information about which side of the tree we are on, and about parity, even or odd, of the level of the tree we are acting on. This is achieved by the states $\sigma_0^0$ and $\sigma_1^0$ which represent the occurrence of $bb$ on the left half of the tree at  even levels and on the right half of the tree at odd levels. The resulting initial transducer $\widetilde{G}_{B} = \gen{\{0,1\},\widetilde{\pi},\widetilde{\lambda}}$ on a two-letter alphabet now transitions similarly to $G_{bb}$, and has states corresponding to  states of $G$. In particular, by construction, any state of $\widetilde{G}^i$ (we do not minimise this transducer as we are interested only in transitions) accessible from $B$ (in $\widetilde{G})$ will correspond to a state in $G^i$ (where we replace $\sigma_i^j$, $i,j = 0,1 $ by the corresponding state of $G$) accessible from $bb$ (in $G$) by reading either a $1$ or $2$ then, in the first case alternating between reading an element of $\{0,2\}$ and an element of  $\{0,1\}$ and in the second between an element of $\{0,1\}$ and an element of $\{0,2\}$.
 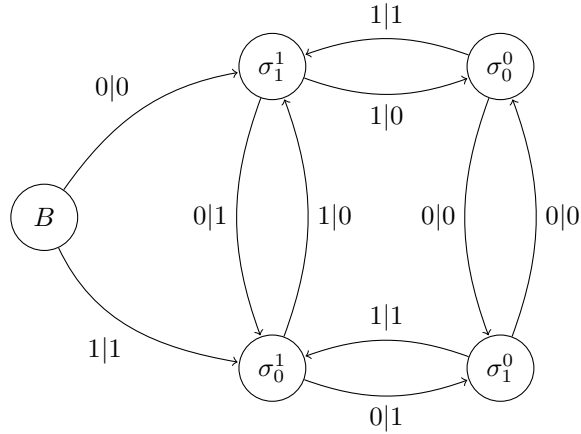
\begin{figure}[H]
 \begin{center}
 \begin{tikzpicture}[shorten >=0.5pt,node distance=4cm,on grid,auto] 
    \node[state] (q_1) [xshift=-1cm,yshift=0cm]   {$B$}; 
    \node[state] (q_2) [xshift =2cm, yshift = -2cm] {$\sigma_0^1$};
    \node[state] (q_3) [xshift =2cm,yshift=2cm] {$\sigma_1^1$};
    \node[state] (q_4) [xshift =5cm,yshift=2cm] {$\sigma_0^0$};
    \node[state] (q_5) [xshift =5cm,yshift=-2cm] {$\sigma_1^0$};
     \path[->] 
     (q_1) edge [out=55, in=190]  node {$0|0$} (q_3)
           edge [out=295, in=170] node[swap] {$1|1$} (q_2)
           
     (q_2) edge [in=290, out =70]  node[swap] {$1|0$} (q_3)
           edge [out=340,in=200] node[swap] {$0|1$} (q_5)
     (q_3) edge [in=110, out=250] node[swap] {$0|1$} (q_2)
           edge [out=340,in=200] node[swap] {$1|0$} (q_4)
     (q_4) edge [out=160,in=20]  node[swap] {$1|1$} (q_3)
           edge [in=110, out=250] node[swap] {$0|0$} (q_5)
     (q_5) edge [out=160,in=20]  node[swap] {$1|1$} (q_2)
           edge [in=290, out =70] node[swap] {$0|0$} (q_4); 
           
 \end{tikzpicture}
 \end{center}
 \caption{The dummy transducer $\widetilde{G}_{B}$.}
 \label{fig: dummy transducer}
 \end{figure}
The point of building the transducer $\widetilde{G}$ is that it encodes the transitions of $G$ in a fashion which is much easier to describe. One should think of $\widetilde{G}$ as a dummy transducer for $G$ in which it is much easier to read transitions as we shall see.
 
 Since we transition  from $B^i$ to $(\sigma_1^1)^i$ by reading $0$ it  suffices to show that the initial transducer $\widetilde{G}_{\sigma_{1}^{1}}$ has exponential growth. Recall that here we are interested in how the number of states  of $\widetilde{G}$ grow without considering the $\omega$-equivalence of these states. We shall then argue from this fact that $G$ has core exponential growth  since the automaton semigroup generated by $G$ is free and the states of $G$ correspond nicely to the states of powers of $\widetilde{G}$ (without minimising). 
 
 First we argue that the number of states of $\widetilde{G}^i_{(\sigma_{1}^{1})^i}$ is at least $2^{\ceil{{i/2}}}$. We stress once more that we are not concerned with the $\omega$-equivalence of some of these states, they merely act as dummy variables for the states of $G_{(bb)^i}^{i}$. In particular whenever we raise $\widetilde{G}_{(\sigma_{1}^{1})}$ to some power, we shall not  minimise it.
 
 Notice that for $x,i,j = 0,1$, 
 
 \begin{IEEEeqnarray}{rCl}
 \widetilde{\pi}(x,\sigma_i^j) &=& \sigma_{i+1}^{x+ij} \label{transition}\\
 \widetilde{\lambda}(x, \sigma_i^j) &=& x+j \mod{2}\label{output}.
 \end{IEEEeqnarray}

 In $\eqref{transition}$ and $\eqref{output}$ subscripts and exponents are taken modulo 2. Since $ a + b \mod{2} = ((a\mod{2}) + (b\mod{2}))\mod{2}$ and $ab\mod{2} = ((a\mod{2})(b\mod{2}))\mod{2}$, we can iterate the above formulae.
 
 We shall require the following notation in order to simplify the discussion that follows. Set, for $i,j \in \mathbb{Z}$, $i\ge 1$.
 
 \begin{equation*}
   \Sigma(i,j):= \sum_{l_1=1}^{j}\sum_{l_2=1}^{l_1}\ldots\sum_{l_i=1}^{l_{i-1}} l_i
 \end{equation*}
 
 If $j=0$ or is negative then take $\Sigma(i,j) = 0$. Notice that the $\Sigma(1,j)$ is simply the sum of the first $j$ numbers $j\ge 1$. Furthermore observe that 
 
 \begin{equation}\label{sumrules}
   \sum_{k=1}^{j} \Sigma(i,k) = \Sigma(i+1, j)
 \end{equation}

\begin{remark}
It is straight-forward to show either by finite calculus or by induction making use of the identity $\sum_{k=m}^{j} \binom{k}{m} = \binom{j+1}{m+1}$ that $\Sigma(i,j) = \binom{j+i}{i+1}$. We shall not require this fact. 
\end{remark} 
 Freeing the symbol $k$, let $k \ge 1 \in \mathbb{N}$ and let $x_1\Skip{x_2}{}{x_k} \in \{0,1\}^k$. In what follows below whenever we have an $x_{i}$ for $ i \in \Z$ and $i < 0$, we shall take $x_i$ to be $0$ and $x_0 = 1$. We have the following claim:
 
 \begin{claim}
 For $i$ even and bigger than or equal to 1 after reading the first $i$ terms the $j$\textsuperscript{th} term of the active state is  
  \begin{equation}\label{formula: i even}
   \sigma_1^{x_i+(j-1)x_{i-1}+ \Sigma(1,j-1)x_{i-2}+  \Sigma(2, j-2)x_{i-3}+ \Sigma(3, j-2)x_{i-4}+ \ldots + \Sigma(i-2, j-i/2)x_1+ \Sigma(i-1, j-i/2)\cdot 1}.
  \end{equation} 
 After reading the first $i+1$ terms of the sequence $x_1\ldots x_k$ through $\widetilde{G}^{k}_{(\sigma_{1}^{1})^k}$ the $j$\textsuperscript{th} term of the active state is 
  \begin{equation}\label{formula: i odd}
  \sigma_0^{x_{i+1}+jx_{i}+\Sigma(1,j-1)x_{i-1}+\Sigma(2,j-1)x_{i-2}+ \Sigma(3,j-2)x_{i-3}+\Sigma(4,j-2)x_{i-4}+ \ldots+\Sigma(i-1,j-i/2)x_1+ \Sigma(i,j-i/2)\cdot 1}
  \end{equation}
 exponents are taken modulo 2.
 \end{claim}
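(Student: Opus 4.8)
The plan is to prove the two displayed formulas \eqref{formula: i even} and \eqref{formula: i odd} simultaneously by induction on $i$, running it as: even formula at level $i$ $\Rightarrow$ odd formula at level $i$ (i.e.\ after reading $i+1$ letters) $\Rightarrow$ even formula at level $i+2$. The engine is the local rule \eqref{transition}--\eqref{output} together with the summation identity \eqref{sumrules}.

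First I would extract a one--step recursion. Since $\widetilde{G}^{k}_{(\sigma_{1}^{1})^{k}}=\widetilde{G}_{\sigma_1^1}\ast\cdots\ast\widetilde{G}_{\sigma_1^1}$ with $k$ factors, reading a letter cascades through the factors: the first factor receives the external letter, and the $l$\textsuperscript{th} factor receives the output of the $(l-1)$\textsuperscript{st}. All factors start with subscript $1$, and by \eqref{transition} each factor flips its subscript at every step, so after reading $m$ letters all factors share a common subscript $s_{m}$ which is $1$ for $m$ even and $0$ for $m$ odd. Writing $e_{m}(j)$ for the exponent of the $j$\textsuperscript{th} factor after reading $x_{1}\ldots x_{m}$, telescoping the output cascade through \eqref{output} shows the $j$\textsuperscript{th} factor is fed the letter $x_{m+1}+\sum_{l=1}^{j-1}e_{m}(l)$, and then \eqref{transition} gives, modulo $2$,
\[
e_{m+1}(j)=x_{m+1}+\sum_{l=1}^{j-1}e_{m}(l)+s_{m}\,e_{m}(j),
\]
i.e.\ $x_{m+1}+\sum_{l=1}^{j}e_{m}(l)$ when $m$ is even and $x_{m+1}+\sum_{l=1}^{j-1}e_{m}(l)$ when $m$ is odd. (Here the stated conventions $x_{r}=0$ for $r<0$, $x_{0}=1$ are in force, and exponents are taken modulo $2$; note the recursion for $e_{m}(j)$ involves only $e_{m}(l)$ for $l\le j$, so the formula is independent of $k$.)

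For the base case, after $m=0$ letters every factor is $\sigma_{1}^{1}$, so $e_{0}(j)=1$, which is the $i=0$ instance of \eqref{formula: i even}; alternatively \eqref{formula: i even} and \eqref{formula: i odd} are checked for $i=2$ directly from the recursion. For the inductive step, substitute \eqref{formula: i even} (for $i$ even) into $e_{i+1}(j)=x_{i+1}+\sum_{l=1}^{j}e_{i}(l)$, interchange the two summations, and apply \eqref{sumrules} in the shifted form $\sum_{l=1}^{j}\Sigma(t-1,\,l-c)=\Sigma(t,\,j-c)$, valid as an integer identity for $c\ge 1$ (the out--of--range summands contributing $0$, and with $\sum_{l=1}^{j}1=j$ handling the leading term). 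Collecting coefficients reproduces \eqref{formula: i odd} term by term: the new top term is $x_{i+1}$, the coefficient of $x_{i}$ becomes $j$, and the chain $\Sigma(1,j-1)x_{i-1},\Sigma(2,j-1)x_{i-2},\Sigma(3,j-2)x_{i-3},\ldots$ appears, with the trailing constant moving from $\Sigma(i-1,j-i/2)$ to $\Sigma(i,j-i/2)$. The symmetric computation, substituting \eqref{formula: i odd} into $e_{i+2}(j)=x_{i+2}+\sum_{l=1}^{j-1}e_{i+1}(l)$ and applying \eqref{sumrules} with upper limit $j-1$ (which is exactly what turns the ``$j-i/2$'' arguments into ``$j-(i+2)/2$''), yields \eqref{formula: i even} with $i$ replaced by $i+2$. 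Since the identities used are exact over $\Z$, one carries the manipulations on the right--hand expressions over $\Z$ and only reduces modulo $2$ at the end.

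The main obstacle is purely bookkeeping: keeping the parity--dependent upper limit ($j$ versus $j-1$) of the inner sum, the accompanying shift in the second argument of $\Sigma$, and the floor/ceiling bookkeeping synchronised through the two halves of the inductive step, where off--by--one errors are easy. The cleanest organisation I would use is to first rewrite both targets in the uniform shape $\sum_{t\ge 0}\Sigma(t-1,\,j-\lceil t/2\rceil)x_{m-t}$ for $m$ even and $\sum_{t\ge 0}\Sigma(t-1,\,j-\lfloor t/2\rfloor)x_{m-t}$ for $m$ odd (with the conventions $\Sigma(-1,r):=1$, $\Sigma(0,r):=r$, consistent with \eqref{sumrules}); the inductive step then collapses to the two elementary identities $\lceil(t-1)/2\rceil=\lfloor t/2\rfloor$ and $1+\lfloor(t-1)/2\rfloor=\lceil t/2\rceil$.
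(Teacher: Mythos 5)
Your proposal is correct and rests on the same engine as the paper's proof: an induction on the number of letters read, driven by \eqref{transition}, \eqref{output} and the summation identity \eqref{sumrules}, with explicit base cases. The genuine difference is organisational. The paper runs an inner induction on $j$ inside the inductive step, propagating the accumulated output of the first $j-1$ coordinates into the $j$\textsuperscript{th} one term at a time, and writes out only the even-to-odd half of the step, declaring the odd case ``analogous''. You instead telescope the output cascade once and for all into the closed one-step recursion $e_{m+1}(j)=x_{m+1}+\sum_{l=1}^{j-1}e_{m}(l)+s_{m}e_{m}(j)$, after which each half of the step is a single substitution followed by a coefficient-by-coefficient application of \eqref{sumrules}; you also carry out both parities, and your uniform rewriting of the two target formulas with a single floor/ceiling parameter makes the shift in the second argument of $\Sigma$ transparent rather than something to be tracked by hand. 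I have checked that the recursion and the coefficient bookkeeping in both halves of your step reproduce \eqref{formula: i even} and \eqref{formula: i odd} exactly, so this buys a shorter and more symmetric verification. The one place to be careful is the base case: the trailing terms $\Sigma(i-1,\cdot)x_{1}+\Sigma(i,\cdot)\cdot 1$ degenerate for $i=0$, and the conventions $\Sigma(-1,r)=1$, $\Sigma(0,r)=r$ together with $x_{0}=1$ would double-count the $jx_{i}$ contribution there; your fallback of verifying $i=1$ and $i=2$ directly from the recursion (as the paper does with its arrays) is the safe route and should be the one actually used.
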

 
 \begin{proof}
 The proof follows by induction and a mechanical calculation making use of $\eqref{sumrules}$, $\eqref{output}$.
 
 We first establish the base cases $i=1$ and $i = 2$. The top row of array \eqref{array: reading 1} consists of $k$ copies of the state $\sigma_1^1$ of $\widetilde{G}_{B}$. The first column of the second row indicates the we are reading the letter $x_1$ through state  $\sigma_1 ^1$. In the second column, the symbol $x_1 +1$ is the input to be read through the second copy of $\sigma_1^1$, and $\sigma_0^{1+x_1}$  is equal to $\widetilde{\pi}( x_1, \sigma_1^1 )$. The remaining columns are to be read in a similar fashion. 
 
 \begin{IEEEeqnarray}{cccccc} \label{array: reading 1}
     & \sigma_1^1 & \sigma_1^1& \sigma_1^1 & \ldots &  \ \sigma_1^1 \nonumber \\
 x_1 \quad & x_1 +1 \  \sigma_0^{1+ x_1} \quad & x_1 + 2 \ \sigma_0^{x_1 +2} \quad& x_1 + 3 \ \sigma_0^{x_1 + 3} \quad  & \ldots& \  x_1 + k \ \sigma_0^{x_1+ k}  
 \end{IEEEeqnarray}
 
 Therefore after reading $x_1$ from the state $(\sigma_1^1)^{k}$ the active state of the transducer $\widetilde{G}^{k}_{(\sigma_1)^{k}}$  is 
 $$ \sigma_{0}^{x_1 +1} \sigma_{0}^{x_1 +2} \sigma_{0}^{x_1 +3}\ldots \sigma_{0}^{x_1 +k} $$
 
 which is as indicated by the formula  \eqref{formula: i odd}.
 
 Now we read $x_2$ through the active state $ \sigma_{0}^{x_1 +1} \sigma_{0}^{x_1 +2} \sigma_{0}^{x_1 +3}\ldots \sigma_{0}^{x_1 +k} $ to establish the case $i = 2$. We shall make use of an array as in \eqref{array: reading 1} to do demonstrate this.
 
 \begin{IEEEeqnarray}{cccc} \label{array: reading 2}
      & \sigma_0^{x_1 +1} & \sigma_0^{x_1 +2}& \sigma_0^{x_1 +3}  \nonumber \\
  x_2 \quad & x_2+ x_1+ \Sigma(1,1) \  \sigma_1^{x_2} \quad & x_2 + 2 x_1 + \Sigma(1,2) \ \sigma_1^{x_2+ x_1+ \Sigma(1,1)} \quad& x_2 + 3 x_1 + \Sigma(1,3)  \ \sigma_1^{x_2+ 2x_1+ \Sigma(1,2)} \quad  
  \end{IEEEeqnarray}
  
  A simple induction shows that the $(k+1)$\textsuperscript{st} entry of the second row is: $$x_2 + k x_1 + \Sigma(1, k) \ \sigma_1^{x_2 + (k-1) x_1 + \Sigma(1, k-1)}$$
  
  and so all the terms of the active state are as indicated by the formula \eqref{formula: i even}.
  
  Now assume that  $i$ is even and $2 \le i \le k-1$ and that the $j$\textsuperscript{th} of the active state after reading the first $i$ terms of $x_1 \ldots x_k$ is as given by the formula \eqref{formula: i even}. We now show that after reading $x_{i+1}$ through the active state the $j$\textsuperscript{th} term of the active state is as given in \eqref{formula: i odd}. We shall proceed by induction on $j$.
  
  By assumption the first term of the active state is
   $\sigma_1^{x_i}$. Therefore $\widetilde{\pi}(x_{i+1},\sigma_1^{x_i}) = \sigma_{0}^{x_{i+1} +  x_i}$ and $\widetilde{\lambda}(x_{i+1},\sigma_1^{x_i}) = x_{i+1} + x_{i}$. Therefore the first term $\sigma_{0}^{x_{i+1} +  x_i}$ of the new active state satisfies the formula \eqref{formula: i odd} with $j = 1$.
   
   By assumption the second term of the current active state is $\sigma_1^{x_i + x_{i-1}} + \Sigma(1,1)x_{i-2}$. Therefore $$\widetilde{\pi}\left(x_{i+1} + x_{i},\sigma_1^{x_i + x_{i-1} + \Sigma(1,1)x_{i-2}}\right) = \sigma_0^{ x_{i+1} + 2 x_{i} + x_{i-1} + \Sigma(1,1) x_{i-2}}$$ and $$\widetilde{\lambda}\left(x_{i+1} + x_{i},\sigma_1^{x_i + x_{i-1} + \Sigma(1,1)x_{i-2}}\right) = x_{i+1} + 2 x_{i} + x_{i-1} + \Sigma(1,1) x_{i-2}.$$
   
   Now we may rewrite  $x_{i+1} + 2 x_{i} + x_{i-1} + \Sigma(1,1) x_{i-2}$ as $x_{i+1} + x x_{i} + \Sigma(1,1) x_{i-1} + \Sigma(2,1) x_{i-2}$ since $\Sigma(2,1) = \Sigma(1,1)$ and $\Sigma(1,1) = 1$. Therefore the 2nd term of the new active state $\sigma_0^{ x_{i+1} + 2 x_{i} + \Sigma(1,1)x_{i-1} + \Sigma(2,1) x_{i-2}}$ satisfies the formula \eqref{formula: i odd} with $j =2$.
   
   Now assume that for $2 \le  j \le k$ the $j-1$\textsuperscript{st} term of the new active state is given by: $$\sigma_0^{x_{i+1}+(j-1)x_{i}+\Sigma(1,j-2)x_{i-1}+\Sigma(2,j-2)x_{i-2}+ \Sigma(3,j-3)x_{i-3}+\Sigma(4,j-3)x_{i-4}+ \ldots+\Sigma(i-1,j-1-i/2)x_1+ \Sigma(i,j-1-i/2)\cdot 1}$$
   
   and the output when $x_i$ is  read through the first $j-1$ terms of the current active state is 
   \begin{IEEEeqnarray*}{rCl}
   x_{i+1}&+&(j-1)x_{i}+\Sigma(1,j-2)x_{i-1}+\Sigma(2,j-2)x_{i-2}+ \Sigma(3,j-3)x_{i-3}+\Sigma(4,j-3)x_{i-4}+ \ldots\\ &+&\Sigma(i-1,j-1-i/2)x_1+ \Sigma(i,j-1-i/2)\cdot 1.
   \end{IEEEeqnarray*}
    Therefore the $j$\textsuperscript{th} term of the new active state will be the active state after  $x_{i+1}+(j-1)x_{i}+\Sigma(1,j-2)x_{i-1}+\Sigma(2,j-2)x_{i-2}+ \Sigma(2,j-3)x_{i-3}+\Sigma(3,j-3)x_{i-4}+ \ldots+\Sigma(i-1,j-1-i/2)x_1+ \Sigma(i,j-1-i/2)\cdot 1$ is read from the current active state. By assumption the current active state is:
    \begin{equation*}
       \sigma_1^{x_i+(j-1)x_{i-1}+ \Sigma(1,j-1)x_{i-2}+  \Sigma(2, j-2)x_{i-3}+ \Sigma(3, j-2)x_{i-4}+ \ldots + \Sigma(i-2, j-i/2)x_1+ \Sigma(i-1, j-i/2)\cdot 1}.
      \end{equation*}
    Making use of the \eqref{transition} and \eqref{output} and the rule \eqref{sumrules}, the new active state is given by
    
    \begin{equation*}
      \sigma_{0}^{x_{i+1} + jx_i + \Sigma(1, j-1) x_{i-1} + \Sigma(2, j-1) x_{i-2} + \Sigma(3, j-2) x_{i-3} + \Sigma(4, j-2) x_{i-4} + \ldots + \Sigma(i-1, j- i/2) x_1 + \Sigma(i, j - i/2) \cdot 1  }
    \end{equation*}
    
    which is exactly the formula given in \eqref{formula: i odd}. 
    
    The case where $i$ is odd  is proved in an analogous fashion.  
 \end{proof}
 
 Observe that for all $i >0$ we have $\Sigma(i,1) = 1$. Now for $i$ even  and $j = i/2 +1$ consider $\widetilde{G}^{i+1}_{(\sigma_{1}^{1})^{i+1}}$, the following formulas determine the exponents of the first $j$ terms of the active state after reading the first $i+1$ terms of the sequence  $x_1, \ldots x_k$. The subscripts of these states are all $0$. 
 
 \begin{IEEEeqnarray*}{rCl}
   & x_{i+1}& +x_i \nonumber \\
   & x_{i+1} & + 2x_i + \Sigma(1,1)x_{i-1} + \Sigma(2,1)x_{i-2} \nonumber \\
   & x_{i+1} & + 3x_{i} + \Sigma(1,2) x_{i-1} + \Sigma(2,2)x_{i-2} + \Sigma(3,1)x_{i-3} + \Sigma(4,1)x_{i-4} \nonumber \\
   &\vdots& \nonumber \\
   & x_{i+1}&+(j-1)x_{i}+\Sigma(1,j-2)x_{i-1}+\Sigma(2,j-2)x_{i-2}+ 
   +\Sigma(i-3,j-i/2+1)x_3+ \Sigma(i-2,j-i/2+1)x_2    \nonumber \\
   & x_{i+1}&+jx_{i}+\Sigma(1,j-1)x_{i-1}+\Sigma(2,j-1)x_{i-2}+ \ldots\Sigma(i-1,j-i/2)x_1+ \Sigma(i,j-i/2)    
 \end{IEEEeqnarray*}
 
Let $y_1,\ldots y_j$ in $\{0,1\}^{j}$  be any sequence. Since the coefficients of the last two terms of all the equations above is $1$, there is a choice of $x_1 \ldots x_{i+1}$ such that the exponent of the $l$\textsuperscript{th} term ($1\le l \le j$) of the active state after reading $x_1 \ldots x_{i+1}$ in $\widetilde{G}^{i+1}_{(\sigma_1^1)^{i+1}}$ is $y_l$. This is achieved inductively, first we solve $x_{i+1} + x_{i} = y_1$ in $\Z_{2}$. This determines $x_{i+1}$ and $x_{i}$. Next we pick  $x_{i-1}$ so that $x_{i+1}  + 2x_i + \Sigma(1,1)x_{i-1} = 0 \mod 2$, and set $x_{i-2} = y_2$.  This determines $x_{i-1}$ and $x_{i-2}$. Therefore we may now pick $x_{i-3}$ so that $x_{i+1}  + 3x_{i} + \Sigma(1,2) x_{i-1} + \Sigma(2,2)x_{i-2} + \Sigma(3,1)x_{i-3} = 0$ and set $x_{i-4} = y_3$. We carry on in this way until we have determined $x_l$ for $i+1 \le l \le 2$. Then we solve  the equation
     $$x_{i+1}+jx_{i}+\Sigma(1,j-1)x_{i-1}+\Sigma(2,j-1)x_{i-2}+ \ldots+ (\Sigma(i,j-i/2) - y_{j}) + \Sigma(i-1,j-i/2)x_1   = 0$$
for $x_1$ in  $\Z_{2}$.

   That is for any  sequence $y_1 \ldots y_j \in  \{0,1\}^{j}$, there is a state of $\widetilde{G}^{i+1}_{(\sigma_1^1)^{i+1}}$ whose first $j$ terms are $\sigma_0^{y_1} \ldots\sigma_0^{y_j}$.
   
Now for $\widetilde{G}_{\sigma_1^1}^{i}$, a similar argument shows for any such sequence $y_1 \ldots y_j$, there is a state of $\widetilde{G}_{(\sigma_1^1)^i}$ whose first $j$ terms is $\sigma_1^{y_1} \ldots\sigma_1^{y_j}$.

 Now using the correspondence stated above that $\sigma_1^1 \sim ab$, $\sigma_0^1 \sim ba$ and $\sigma_0^0 \sim bb$ and $\sigma_1^0 \sim bb$, the states of $\widetilde{G}_{\sigma_1^1}^{i+1}$ and $\widetilde{G}_{\sigma_1^1}^{i}$ correspond to states of $G^{i}$ and $G^{i+1}$ accessible from the state $(bb)^{i}$ and $(bb)^{i+1} $.   Since the automaton semigroup generated by $G$ is free, then two different words in $\{a,b\}^{2(i+1)}$ will correspond to distinct states of $G^{(2(i+1))}$. Now by the arguments above we have that for every element $y_1\ldots y_j$ in the set $\{0,1\}^{j}$ $\widetilde{G}_{\sigma_1^1}^{i+1}$ and $\widetilde{G}_{\sigma_1^1}^{i}$ have states beginning with $\sigma_0^{y_1}\ldots \sigma_{0}^{y_j}$ and $\sigma_1^{y_1}\ldots \sigma_{1}^{y_j}$  respectively. Now using the fact that the automaton semigroup generated by $G$ is free, it follows that for $y_1 \ldots y_j$ and $y_1' \ldots y_j'$ in $\{0,1\}^{j}$, the states $\sigma_{l}^{y_1}\ldots  \sigma_{l}^{y_j}$ and $\sigma_{l}^{y_1'}\ldots  \sigma_{l}^{y_j'}$ for $l \in \{0,1\}$ correspond to distinct states of $G^{j}$.  Therefore $G^{i+1}_{(bb)^{i+1}}$ has at least $2^{i/2 \ +1} = 2^{\ceil{{(i+1)/2}}}$ states  $G^{i}_{(bb)^{i}}$ has at least $2^{\ceil{{(i+1)/2}}}$ states. It now follows that for arbitrary $i \in \mathbb{N}$,  $G_{b^i}$ has at least $2^{\floor{i/2}}$ states for any $i \ge 1 \in \mathbb{N}$. 

The above all together now means that $G$ is an element of $\hn{3}$ with core exponential growth. Therefore we have: 

\begin{theorem}
For any $n > 2$ there are elements of $\hn{n}$ which have core exponential growth. 
\end{theorem}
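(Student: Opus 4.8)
The plan is to prove the statement in two steps according to the size of the alphabet, since the regimes $n \ge 4$ and $n = 3$ need genuinely different inputs; concretely, I would exhibit in each case an explicit element of $\hn{n}$ and show that its core grows exponentially.

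For $n \ge 4$ I would use the two-state transducer $H$ of Figure~\ref{coreexpgrowthexampleinh4} (its transitions act nontrivially only on the letters $0,1,2,3$ and fix every other letter, which is why four letters are needed). One checks the routine facts that $H$ is invertible in the automaton-theoretic sense with inverse again in $\pn{n}$, and that $H$ is core and strongly synchronizing at level $1$, so that $H \in \hn{n}$. The point is then the same mechanical verification already carried out above for Figures~\ref{shiftmaphascoreexpgrowthrate} and~\ref{shntwocoreexp}: $H$ satisfies the hypothesis of Lemma~\ref{core=power}, whence $\core(H^{m}) = H^{m}$ for all $m \in \N$ and $H$ has core exponential growth rate. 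This settles every $n \ge 4$.

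The case $n = 3$ is the main obstacle, and is exactly the content of the long argument preceding the theorem. One cannot reduce to a small alphabet: by Hedlund's theorem $\hn{2} \cong \Z/2\Z$, and the two-state, three-letter transducer $G$ of Figure~\ref{anelementofH3witcoreexponentialgrowth} does not satisfy the hypothesis of Lemma~\ref{core=power}. Instead I would run the argument above. Apply Proposition~\ref{infinite=expgrowth1} to the graph of bad pairs of $G$ at level $1$ (which contains a loop) to conclude that the automaton semigroup generated by $G$ is \emph{free}; the crucial consequence is that no $\omega$-reductions are ever possible, so $\core(G^{i})$ is literally the sub-transducer on the states of $G^{i}$ accessible from $(bb)^{i}$, and distinct equal-length words in $\{a,b\}$ name distinct states of the corresponding power of $G$. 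Next, pass to the two-letter dummy transducer $\widetilde{G}$ of Figure~\ref{fig: dummy transducer}, with its clean recursions $\widetilde{\pi}(x,\sigma_i^j) = \sigma_{i+1}^{x+ij}$ and $\widetilde{\lambda}(x,\sigma_i^j) = x+j \bmod 2$; prove by induction the Claim expressing the exponents of the coordinates of the active state of $\widetilde{G}^{k}_{(\sigma_1^1)^{k}}$ after reading $x_1 \ldots x_i$ in terms of the iterated sums $\Sigma(i,j)$; and exploit that the last two coefficients in each of those expressions equal $\Sigma(i,1) = 1$ to solve, for any target word $y_1 \ldots y_j \in \{0,1\}^{j}$, a triangular linear system over $\Z_2$ producing a state of $\widetilde{G}^{i}_{(\sigma_1^1)^{i}}$ — and likewise of $\widetilde{G}^{i+1}_{(\sigma_1^1)^{i+1}}$ — whose first $j$ coordinates are $\sigma_1^{y_1}\ldots\sigma_1^{y_j}$ (respectively $\sigma_0^{y_1}\ldots\sigma_0^{y_j}$). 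Transferring this back along the dictionary $\sigma_1^1 \sim ab$, $\sigma_0^1 \sim ba$, $\sigma_0^0 \sim bb \sim \sigma_1^0$, and using freeness of the automaton semigroup of $G$ so that these dummy states really do name distinct states of $G^{i}$, one obtains that $G_{b^{i}}$ has at least $2^{\floor{i/2}}$ states; hence $G \in \hn{3}$ has core exponential growth.

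Combining the two cases gives the theorem. The delicate part is the $n = 3$ analysis: the inductive Claim needs careful bookkeeping with the sums $\Sigma(i,j)$ and the identity $\sum_{k=1}^{j}\Sigma(i,k) = \Sigma(i+1,j)$, and the reduction from $G$ to the dummy transducer $\widetilde{G}$ must be set up so that counting (non-minimised) states of $\widetilde{G}^{i}$ genuinely bounds $|\core(G^{i})|$ from below — it is freeness of the automaton semigroup of $G$ that makes this transfer legitimate, and the solvability of the triangular $\Z_2$-systems (from the leading coefficients being $1$) that produces the exponential rate.
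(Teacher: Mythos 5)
Your proposal is correct and follows essentially the same route as the paper: the case $n \ge 4$ via Lemma~\ref{core=power} applied to the transducer of Figure~\ref{coreexpgrowthexampleinh4}, and the case $n=3$ via the freeness of the automaton semigroup of $G$ (from Proposition~\ref{infinite=expgrowth1}), the dummy transducer $\widetilde{G}$, the inductive claim involving the sums $\Sigma(i,j)$, and the solvable triangular systems over $\Z_2$ yielding at least $2^{\floor{i/2}}$ core states. No substantive differences from the paper's argument.
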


\begin{remark}
 for $i \in \mathbb{N}$, the maximum difference in the size of elements of $\mathcal{H}_{n}$ which are bi-synchronizing at level $i$ grows exponentially with $i$.
\end{remark}
\begin{proof}
For each $i \in \mathbb{N}$ it is possible to construct an  element of $\hn{n}$ which is bi-synchronizing at level $i$, see Figure \ref{Anelementofh3bisynchronizingatleveli} for an indication of how to do so. On the other hand  there are elements of $\hn{n}$ which are bi-synchronizing at level 1, and which have core exponential growth (for instance the example in Figure \ref{anelementofH3witcoreexponentialgrowth} ). Let $G$ be such an element. Then $\min\core(G^{i})$ is bi-synchronizing at level $i$ by Lemma~\ref{claim-SyncLengthsAdd} and has at least $e^{ci}$  states for some positive constant $c$. Therefore the maximum difference in the size of elements of $\hn{n}$ which are bi-synchronizing at level $i$ is at least $e^{ci} - i -1$.
\end{proof}

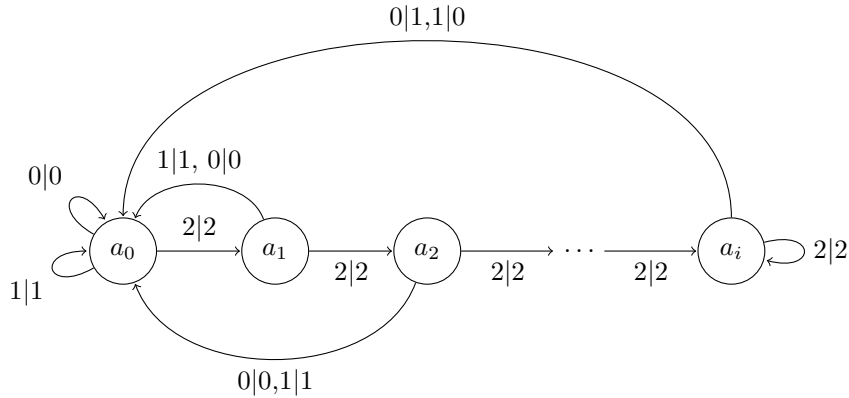
\begin{figure}[H]
\caption{An element of $\hn{3}$ bi-synchronizing at level $i$.}
 \label{Anelementofh3bisynchronizingatleveli}
\begin{center}
 \begin{tikzpicture}[shorten >=0.5pt,node distance=4cm,on grid,auto] 
    \node[state] (q_1) [xshift=0cm,yshift=0cm]   {$a_0$}; 
    \node[state] (q_2) [xshift =2cm, yshift = 0cm] {$a_1$};
    \node[state] (q_3) [xshift =4cm,yshift=0cm] {$a_2$};
    \node (q_4) [xshift =6cm,yshift=0cm] {$\ldots$};
    \node[state] (q_5) [xshift =8cm,yshift=0cm] {$a_i$};
     \path[->] 
     (q_1) edge [out=150, in=120, loop]  node {$0|0$} ()
           edge [out=210, in=180,loop] node {$1|1$} ()
           edge node {$2|2$} (q_2)
     (q_2) edge [in=70, out =110]  node[swap] {$1|1$, $0|0$} (q_1)
           edge  node[swap] {$2|2$} (q_3)
     (q_3) edge [in=290, out=250] node {$0|0$,$1|1$} (q_1)
           edge  node[swap] {$2|2$} (q_4)
     (q_4) edge  node[swap] {$2|2$} (q_5)
     (q_5) edge [out=90,in=90]  node[swap] {$0|1$,$1|0$} (q_1)
           edge [loop right] node[swap] {$2|2$} (); 
           
 \end{tikzpicture}
 \end{center}
\end{figure}
In the subsequent discussion we explore some of the elementary properties of the core growth rate, and state a conjecture about the core growth rates of elements of $\shn{n}$ which have infinite order.

\begin{lemma}
Let $A \in \T{P}_{n}$ be an element of infinite order. If $B$ is conjugate to $A$ in $\T{P}_{n}$ then core growth rate of $B$ is equivalent to the core growth rate of $A$.
\end{lemma}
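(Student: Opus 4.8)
The plan is to reduce the statement to a single observation: conjugating by a fixed transducer changes the number of states of the minimal core representative only by a bounded multiplicative constant, and a perturbation of that kind leaves the growth class (logarithmic, polynomial, exponential) unchanged.

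First I would fix notation. Write the conjugacy as $B = C^{-1}\ast A\ast C$, where $C$ is a unit of $\T{P}_{n}$ with monoid inverse $C^{-1}\in \T{P}_{n}\subseteq\spn{n}$; in particular $C$ and $C^{-1}$ are core, strongly synchronizing, synchronous transducers, and $\min\core(C\ast C^{-1}) = \min\core(C^{-1}\ast C)$ is the identity transducer. Let $\phi_X$ denote the shift endomorphism of $X_n^{\Z}$ induced by $X$. I would use three facts from the earlier material: that a transducer, its core, and its minimisation all induce the same map on $X_n^{\Z}$; that $X\mapsto \phi_{X}$ is a homomorphism from $\spn{n}$ (under the $\min\core$-product) into the endomorphism monoid of the shift, so that $\phi_{C^{-1}}\phi_{C}=\phi_{C}\phi_{C^{-1}}=\mathrm{id}$; and that this homomorphism is injective, i.e. the minimal core transducer representing a given endomorphism is unique (\cite{BleakCameronOlukoya1}, \cite{Olukoya2018}). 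Combining these,
\[
\phi_{B^i}\;=\;\phi_{C^{-1}}\,\phi_{A}^{\,i}\,\phi_{C}\;=\;\phi_{C^{-1}}\,\phi_{\min\core(A^i)}\,\phi_{C}\;=\;\phi_{\,C^{-1}\,\ast\,\min\core(A^i)\,\ast\,C\,},
\]
and injectivity of $X\mapsto\phi_X$ then yields the key identity
\[
\min\core(B^i)\;=\;\min\core\!\left(C^{-1}\ast\min\core(A^i)\ast C\right)\qquad\text{for all }i\ge 1 .
\]

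Next I would extract the size bound. The state set of a $\ast$-product is the Cartesian product of the state sets, and both minimisation and reduction to the core only remove states, so, writing $m:=\max(|Q_{C}|,|Q_{C^{-1}}|)$,
\[
\bigl|\min\core(B^i)\bigr|\;\le\;\bigl|C^{-1}\ast\min\core(A^i)\ast C\bigr|\;=\;|Q_{C^{-1}}|\cdot\bigl|\min\core(A^i)\bigr|\cdot|Q_{C}|\;\le\;m^{2}\,\bigl|\min\core(A^i)\bigr| .
\]
Since conjugacy is symmetric — $A = C\ast B\ast C^{-1}$, i.e. $A$ is the conjugate of $B$ by the unit $C^{-1}$ — the same argument gives $\bigl|\min\core(A^i)\bigr|\le m^{2}\bigl|\min\core(B^i)\bigr|$. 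Hence $\tfrac{1}{m^{2}}\bigl|\min\core(A^i)\bigr|\le\bigl|\min\core(B^i)\bigr|\le m^{2}\bigl|\min\core(A^i)\bigr|$ for all $i$, and two functions of $i$ agreeing up to a fixed multiplicative constant lie in the same one of the classes logarithmic, polynomial, exponential; therefore $B$ has the same core growth rate as $A$. The hypothesis that $A$ has infinite order is used only to ensure the notion of core growth rate is non-degenerate, and $B$, being conjugate to $A$, also has infinite order.

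The main obstacle — essentially the only point needing care — is the key identity $\min\core(B^i)=\min\core(C^{-1}\ast\min\core(A^i)\ast C)$, i.e. that one may legitimately ``insert'' $\min\core$ around $A^i$ inside the conjugate. This is exactly where the structural results of \cite{BleakCameronOlukoya1}/\cite{Olukoya2018} are invoked: uniqueness of the minimal core representative of a shift endomorphism, together with the fact that the induced map of $C^{-1}\ast A^{i}\ast C$ is the composite of the maps of $C^{-1}$, of $A^{i}$ (equivalently of $\min\core(A^{i})$), and of $C$. One must also verify that $C$ and $C^{-1}$ genuinely lie in $\spn{n}$, which is precisely the statement that $C$ is a unit of $\T{P}_{n}$; once that is in place the remaining steps are the elementary bookkeeping above.
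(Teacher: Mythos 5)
Your proposal is correct and follows essentially the same route as the paper: both rest on the identity $\min\core(C^{-1}\ast A^{m}\ast C)\wequal B^{m}$ (valid because reduction to the minimal core is part of the multiplication in $\T{P}_{n}$) followed by the state-count bound coming from the Cartesian-product state set. Your write-up is in fact more careful than the paper's, which records the bound as an equality $|C||A^{m}||C|=|B^{m}|$ where only an inequality holds and omits the symmetric argument giving the matching lower bound.
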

\begin{proof}
Let $C \in \T{P}_{n}$ be such that $B$ is the minimal transducer representing the core of $C^{-1}AC$. 

Since $\T{P}_{n}$  restricting to the core is a part of multiplication in $\T{P}_{n}$. It follows that $\min(Core(C^{-1}A^{m}C)) \wequal B^{m}$, where $A^m$ and $B^m$ are here identified with the minimal automaton representing the core of $A^{m}$ and $B^{m}$ respectively. 

This readily implies:

\[
  |C||A^m||C| = |B^m|
\]

as required.
\end{proof}

The next lemma shows that the core growth rate is invariant under taking powers.

\begin{lemma} \label{coregrowthratepowerinvariant}
Let $A \in \T{P}_{n}$ of infinite order, and let $\chi$ be one of `exponential', `polynomial', or `logarithmic'. Then if there is some $m \in \mathbb{N}$ such that $Core(A^m)$ has core $\chi$ growth rate, then $A$ also has core $\chi$ growth rate.
\end{lemma}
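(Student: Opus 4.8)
The plan is to show that having core $\chi$ growth is unchanged when one replaces $A$ by a fixed power $A^m$, for each of the three growth types. The key point is that, inside $\T{P}_n$, reducing to the core is compatible with multiplication (as used already in the proof of the conjugacy invariance lemma), so that if we write $C_j$ for the minimal transducer representing $\core(A^j)$, then the sequence $\bigl(|C_j|\bigr)_{j \in \N}$ is exactly the object whose growth type we are measuring. The hypothesis tells us that the subsequence $\bigl(|C_{mj}|\bigr)_{j \in \N}$ has growth type $\chi$, and we must promote this to a statement about the full sequence $\bigl(|C_j|\bigr)_{j\in\N}$.

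First I would record the elementary size estimate that controls consecutive powers. Since $A^{j+1} \wequal \min\core(A^j \ast A)$, and since the state set of a product of transducers is contained in the product of the state sets (before minimising, and minimising and coring only shrink it), we get
\[
  |C_{j+1}| \le |C_j|\cdot|Q_A|,
\]
and more generally $|C_{j+r}| \le |C_j| \cdot |Q_A|^r$ for every $r \ge 0$. This is the only inequality we need: it says the sequence $\bigl(|C_j|\bigr)$ cannot grow faster than exponentially and, crucially, that the values on a block of $m$ consecutive indices $mj, mj+1, \ldots, mj+m-1$ are all pinched between $|C_{mj}|$ and $|C_{mj}|\cdot|Q_A|^{m-1}$ (using $|C_{mj}| \le |C_{mj + r}|\cdot |Q_A|^{m-1-r}$ is not quite automatic, so instead I would bound downward via $|C_{mj+r}| \ge |C_{m(j-1)}|$ when $j\ge 1$, which follows from $|C_{a}| \le |C_b|$ whenever... — see next paragraph).

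Actually the cleanest route is: for any index $\ell$, write $\ell = mj + r$ with $0 \le r < m$ and $j = \floor{\ell/m}$; then on one hand $|C_\ell| \le |C_{mj}|\cdot |Q_A|^{r} \le |C_{mj}|\cdot|Q_A|^{m}$, and on the other hand $|C_\ell| \ge |C_{m\lceil \ell/m\rceil - m}|$ is not valid in general, so I would instead simply observe $|C_{\ell}| \ge |C_{m j}| / |Q_A|^{m}$ is likewise not automatic. The honest fix: the core growth rate is by definition an equivalence class of growth functions under the standard equivalence $f \sim g$ iff $f \preceq g$ and $g \preceq f$ where $f \preceq g$ means $f(\ell) \le C\,g(C\ell + C) + C$. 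Under this equivalence, passing between $\ell$ and $m\lfloor \ell/m\rfloor$ costs only a constant factor in the argument, and multiplying by $|Q_A|^{m}$ (a constant, since $m$ is fixed) is absorbed; the lower bound comes from monotonicity of $\ell \mapsto |C_\ell|$ along multiples of $m$ together with $|C_{mj}| \le |C_\ell|\cdot|Q_A|^{m}$ which rearranges to $|C_\ell| \ge |C_{mj}|\cdot|Q_A|^{-m}$. So the map $\ell \mapsto |C_\ell|$ and $j \mapsto |C_{mj}|$ differ by a rescaling of the variable by $m$ and by bounded multiplicative constants, hence define the same growth type, and this is preserved under each of $\chi \in \{\text{logarithmic},\text{polynomial},\text{exponential}\}$ since all three classes are closed under $\ell \mapsto c\ell$ and under multiplication by constants.

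The main obstacle is therefore purely bookkeeping about the chosen notion of "growth rate equivalence": one must make sure the definition of core $\chi$ growth is robust enough that a linear reparametrisation of the exponent variable (replacing $\ell$ by $m\lfloor \ell/m\rfloor$) and an $O(1)$ multiplicative fudge do not change the class — which is true for the logarithmic, polynomial, and exponential scales but would fail for, say, a notion sensitive to the exact exponential base or exact polynomial degree. I would make this explicit: fix the equivalence "$f\sim g$ iff each is dominated by a constant-rescaled constant-multiple of the other", verify the two-sided bound $|Q_A|^{-m}\,|C_{m\lfloor \ell/m\rfloor}| \le |C_\ell| \le |Q_A|^{m}\,|C_{m\lfloor \ell/m\rfloor}|$ from the product estimate above together with monotonicity $|C_a|\le|C_b|$ for $a\le b$ along multiples of $m$ (which itself follows since $A$ has infinite order, so no $C_j$ is trivial and $|C_{j+m}| \ge |C_j|$ is forced by the fact that $C_{j+m}$ surjects appropriately — or more simply, one does not even need strict monotonicity, only the displayed two-sided bound). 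Then conclude that $\ell\mapsto|C_\ell|$ is equivalent to $j\mapsto |C_{mj}|$, which by hypothesis is of type $\chi$; hence $A$ has core $\chi$ growth.
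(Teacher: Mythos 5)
Your overall strategy coincides with the paper's: write $\ell = mj+r$ with $0\le r<m$ and sandwich $|C_\ell|$ (where $C_j \seteq \min\core(A^{j})$) between constant multiples of nearby terms of the subsequence $(|C_{mj}|)_{j}$, using the submultiplicative estimate $|C_{a+b}|\le |C_{a}|\cdot|Q_A|^{b}$ coming from $A^{a+b}=A^{a}\ast A^{b}$, and then observe that the logarithmic, polynomial and exponential classes are stable under linear reparametrisation of the variable and bounded multiplicative error. Your upper bound $|C_\ell|\le |C_{m\floor{\ell/m}}|\cdot|Q_A|^{m}$ is correct and is exactly the paper's inequality $|A|^{i}\,|\min\core((A^{m})^{k})|\ge|\min\core(A^{km+i})|$.

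The gap is in your lower bound. You assert $|C_{m\floor{\ell/m}}|\le|C_\ell|\cdot|Q_A|^{m}$, justified only by ``monotonicity of $|C_j|$ along multiples of $m$'' (or an unspecified surjection of state sets). Neither is established, and neither follows from the product estimate: submultiplicativity only bounds \emph{larger} powers in terms of smaller ones, never the reverse, and minimisation together with coring can in principle make $|C_{m(j+1)}|$ smaller than $|C_{mj}|$; infinite order alone does not force $|C_{j+m}|\ge|C_{j}|$. You flag the difficulty twice (``not quite automatic'') but the final displayed two-sided bound still rests on it. The repair is precisely the paper's move: anchor the lower bound at the multiple of $m$ \emph{above} $\ell$. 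Since $m\ceil{\ell/m}=\ell+(m-r)$ and $A^{m\ceil{\ell/m}}=A^{\ell}\ast A^{m-r}$, one gets $|C_{m\ceil{\ell/m}}|\le|C_\ell|\cdot|Q_A|^{m}$, hence $|C_\ell|\ge|C_{m\ceil{\ell/m}}|/|Q_A|^{m}$, with no monotonicity needed; as $\ceil{\ell/m}\ge\ell/m$, the hypothesis on the subsequence then supplies a lower bound of type $\chi$ (this is the paper's $|\min(A^{km+i})|\ge|\min\core((A^{m})^{k+1})|/|A|^{m-i}$). With that substitution your argument goes through and is otherwise the paper's proof.
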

\begin{proof}
This is a straight-forward observation. 
Let $m \in \mathbb{N}$ be fixed such that $\core(A^m)$ has exponential growth. 

Let $i \in \mathbb{Z}_{m}$ and let $k \in \mathbb{N}$. Now notice that $|\min (A^{km+i})| \ge |\min Core(A^m)^{k+1}|/|A|^{m-i} \ge \exp^{c(k+1)}/|A|^{m-i} \ge \exp^{c(km+i)/m}/|A|^{m-i}$, for a positive constant $c$. Now as every positive integer can be written at some $qm+i$, $0 \le q \in \mathbb{Z}$ and $i \in \mathbb{Z}_{m}$ we are done.

If $\core(A^m)$ has polynomial growth rate, then there are positive numbers $C$ and $d$ such that  $|\min\core(\core(A^m)^{k})| \le Cn^d$. Now consider the following inequalities:
\[
|A|^{i}C(km+i)^d \ge |A|^{i}Ck^d \ge |A|^{i}|\min\core(\core(A^m)^{k})| \ge |A^i \min\core(\core(A^m)^{k})| \ge  |\min\core(A^{km+i})|
\]

An Analogous argument shows that if $\core(A^m)$ has core logarithmic growth rate then so does $A$.
\end{proof}

As a corollary of the lemma above we are able to reduce the question of determining the  core growth rates for non-initial automata to the question of determining the growth rate of initial automata.
 
 \begin{corollary}\label{Reduction to initial automata}
 Let $A \in \shn{n}$ then the core growth rate of $A$ is equivalent to the growth rate of some initial automaton $B_{q_0}$. 
 \end{corollary}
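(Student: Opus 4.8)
The plan is to reduce, after passing to a suitable power, to an initial automaton, by placing a \emph{diagonal} start state inside the core of every power and then exploiting that cores of strongly synchronizing transducers are strongly connected. To set this up, suppose $B$ is a strongly synchronizing transducer and $q_{0}$ a state of $B$ such that $(q_{0},\dots,q_{0})$ is a state of $\core(B^{m})$ for every $m\ge 1$. The automaton $\core(B^{m})$ is a subautomaton of $B^{m}$, and it is strongly connected: if $s,t$ are core states of $B^{m}$, write $t$ as the image under the synchronizing map of $B^{m}$ of a word $\Gamma$ of its synchronizing level, so that $\pi_{B^{m}}(\Gamma,s)=t$ with the path from $s$ staying inside $\core(B^{m})$. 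Hence the states of $B^{m}$ accessible from $(q_{0},\dots,q_{0})$ are exactly those of $\core(B^{m})$, so $\bigl|\min\core(B^{m})\bigr|=\bigl|\min\bigl((B_{q_{0}})^{m}\bigr)\bigr|$ for all $m$; that is, the core growth rate of $B$ coincides with the growth rate of the initial automaton $B_{q_{0}}$.

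First I would observe that a core state $q$ of $A$ need not serve directly (in general $(q,\dots,q)$ is not a state of $\core(A^{m})$), and that a power repairs this. Let $k$ be a synchronizing level of $A$ and let $\overline{A}_{k}$ be the transformation of the finite set $\xnl{k}$ from the end of Section~\ref{preliminaries}. Being a self-map of a finite set, $\overline{A}_{k}$ has a periodic point: there are $w\in\xnl{k}$ and $\ell\ge 1$ with $\overline{A}_{k}^{\ell}(w)=w$. Put
\[
q_{0}\seteq\bigl(\mathfrak{s}(w),\ \mathfrak{s}(\overline{A}_{k}(w)),\ \dots,\ \mathfrak{s}(\overline{A}_{k}^{\ell-1}(w))\bigr)\in Q_{A}^{\ell}=Q_{A^{\ell}},
\]
where $\mathfrak{s}$ is the synchronizing map of $A$. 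A direct check with the product rule for transducers shows that reading $w$ from $q_{0}$ in $A^{\ell}$ returns $q_{0}$ and emits $w$: coordinate $i$ reads $\overline{A}_{k}^{i-1}(w)$ from the state $\mathfrak{s}(\overline{A}_{k}^{i-1}(w))$ that this word forces, stays there, and emits $\overline{A}_{k}^{i}(w)$, so the total output is $\overline{A}_{k}^{\ell}(w)=w$. Iterating, reading $w^{N}$ from $q_{0}$ in $A^{\ell}$ returns $q_{0}$ and emits $w^{N}$, for every $N\ge 1$.

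Next I would set $B\seteq A^{\ell}$ and verify the hypothesis of the first paragraph. Fix $m\ge 1$; since $B^{m}=A^{\ell m}$, reading $w^{N}$ from $(q_{0},\dots,q_{0})$ keeps each coordinate at $q_{0}$, the output $w^{N}$ of one coordinate being the input of the next. Choosing $N$ large enough that $Nk$ exceeds the synchronizing level of $A^{\ell m}$ (which is at most $\ell m k$, by repeated use of Lemma~\ref{claim-SyncLengthsAdd}), the tuple $(q_{0},\dots,q_{0})$ is forced by $w^{N}$ and hence lies in $\core(B^{m})$. By the first paragraph the core growth rate of $B=A^{\ell}$ equals the growth rate of the initial automaton $B_{q_{0}}=(A^{\ell})_{q_{0}}$.

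Finally I would pass from the power back to $A$. Since restricting to the core is part of the multiplication, $\min\core\bigl((\core(A^{\ell}))^{j}\bigr)=\min\core(A^{\ell j})$ for all $j$, so $\core(A^{\ell})$ has core $\chi$ growth rate exactly when $A$ does, for each $\chi\in\{\text{exponential},\text{polynomial},\text{logarithmic}\}$: one implication is immediate (restrict the core growth function of $A$ to the multiples of $\ell$), the other is Lemma~\ref{coregrowthratepowerinvariant}. Combined with the previous step, this absorbs the substitution $m\mapsto\ell m$ and yields the corollary, with $B_{q_{0}}=(A^{\ell})_{q_{0}}$ the required initial automaton. (We may assume $A$ has infinite order, since the corollary restricts to that case via Lemma~\ref{coregrowthratepowerinvariant}, and if $A$ has finite order the cores of its powers are uniformly bounded and the statement is trivial.) I expect the main obstacle to be the second step---pinning down, via the finiteness of $\xnl{k}$, the state $q_{0}$ and power $\ell$ for which the diagonal lands in the core of every further power; strong connectedness of the core and the growth-rate bookkeeping under a linear change of variable are routine.
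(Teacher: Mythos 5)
Your proposal is correct and follows essentially the same route as the paper: the paper also finds a periodic point of the induced transformation $\overline{A}_{1}$ on $X_n$ (you use $\overline{A}_{k}$ on $X_n^{k}$, which changes nothing essential), uses the resulting $x|x$ (resp.\ $w|w$) self-loop on a state $q_0$ of a power of $A$ to conclude that the diagonal state $q_0^{m}$ is forced and hence lies in the core of every further power, and takes $B_{q_0}$ to be that power initialised at $q_0$. Your write-up merely makes explicit two steps the paper leaves implicit, namely the strong connectivity of the core and the transfer back from $A^{\ell}$ to $A$ via Lemma~\ref{coregrowthratepowerinvariant}.
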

 \begin{proof}
 Consider the transformation $\overline{A}_{1}$ of $\xn{n}$. Observe that there is an $i \in \mathbb{N}$ and  $x \in X_n$ such that $(x)\overline{A}^{i} = x$. 
 
 This means, since the map from $\spn{n}$ to the monoid of transformations of the set $\xn{n}$ by $D \mapsto \overline{D}_{1}$ is a homomorphism, that there is a state of $q_{0}$ of $\min\core(A^i)$ with a loop labelled $x|x$ based at $q_0$. This readily implies that for any power $A^{ki}$ of $A^i$ the state $q_0^k$ is in the core, since this is the unique state of $A^{ki}$ with loop labelled $x|x$. Therefore we may take $B = \min(A^{i}_{q_0})$.
 \end{proof}
 
 We have the following conjecture about the growth rates of elements of $\shn{n}$:
 
 \begin{conjecture}\label{Conjecture}
 Let $A \in \shn{n}$ be an element of infinite order, then the core growth rate of $A$ is exponential. Moreover for any $i \in \N$, $|\min \core(A^{i})|$ is greater than or equal to $i$.
 \end{conjecture}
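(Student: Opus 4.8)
\textbf{Towards Conjecture~\ref{Conjecture}.} We outline one line of attack; the two obstacles isolated below seem to be the essential content.

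\textbf{Reduction and a first weak bound.} By Lemma~\ref{coregrowthratepowerinvariant} the core growth rate is power-invariant, and by Corollary~\ref{Reduction to initial automata} it agrees with the ordinary growth rate of a suitable initial automaton, so one may freely replace $A$ by a power of itself and work directly with the sizes $|\min\core(A^i)|$. A first, easy observation is that infinite order already forces $|\min\core(A^i)| \to \infty$: there are only finitely many transducers on a bounded number of states over $X_n$, and $\core(A^m)$ determines the shift-commuting map $f_A$ raised to the $m$-th power, so if the cores $\min\core(A^i)$ stayed bounded in size we would have $\min\core(A^i) \wequal \min\core(A^j)$ for some $i < j$, hence $f_A^{j-i} = \id$, contradicting infinite order. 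This gives a lower bound that is only sublinear, and neither of the two claims in the conjecture.

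\textbf{The linear bound.} To upgrade ``$\to \infty$'' to ``$\ge i$'' the natural target is a statement about the minimal synchronizing level $k_i$ of $\min\core(A^i)$: that $k_i$ is non-decreasing and in fact $k_{i+1} \ge k_i + 1$, i.e. multiplication by $A$ cannot be entirely absorbed into the core. This would give $k_i \ge i$ (as $k_1 \ge 1$), and combined with an upper bound of the shape $k_i \le |\min\core(A^i)|$ for minimal core transducers it yields $|\min\core(A^i)| \ge i$. Infinite order is exactly what should force the strict increase — stabilisation of the $k_i$ would, again by the finiteness of transducers of bounded size, give $f_A^i = f_A^j$ for some $i \ne j$ — but the delicate point is to obtain this for \emph{every} $i$, not merely asymptotically, and to prove the companion inequality $k_i \le |\min\core(A^i)|$: the obvious pair-collapse argument (a path in the graph of distinct pairs of core states that avoids the diagonal cannot repeat a vertex, lest the transducer fail to synchronize) gives only a polynomial bound, $k_i = O(|\min\core(A^i)|^2)$, and the sharp linear relation is itself open.

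\textbf{Exponentiality and the main obstacle.} For the main assertion one would apply the graph of bad pairs and Proposition~\ref{infinite=expgrowth1}, after passing to a power $A^m$ and reducing to an initial automaton: the plan is to show that infinite order of $A$ forces $G_r(A^m)$, for suitable $m$ and $r$, to contain a set $\T{S}$ of core states with $|\T{S}| \ge 2$ and $\T{S}(2)$ consisting of vertices each of which accesses a circuit, so that the automaton semigroup contains a free subsemigroup of rank $|\T{S}|$, hence distinct words over $\T{S}$ give $\omega$-inequivalent states of the appropriate power; one then shows separately that these powers have exponentially many reachable core states, and together these give exponential core growth. The worked element of $\hn{3}$ in Figure~\ref{anelementofH3witcoreexponentialgrowth}, whose dummy transducer $\widetilde{G}$ makes the recursion for the states of $\widetilde{G}^{k}$ explicit via \eqref{transition} and \eqref{output} and yields the count $2^{\lceil i/2 \rceil}$, is the prototype for exactly this scheme. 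The crux — and the hard part — is the implication ``unbounded synchronizing level of an invertible core transducer, under taking powers, $\Rightarrow$ a circuit in the graph of bad pairs accessible from a pair of core states'': a priori the core could grow only polynomially while the synchronizing level grows linearly, and excluding this seems to require a structural understanding of how bad pairs propagate under the self-product $A \mapsto \core(A \ast A)$, for which the $\hn{3}$ computation is at present the only model. Carrying this out for an arbitrary $A \in \shn{n}$ — in particular for one-way synchronizing $A$, where $A^{-1}$ has no bounded window and the symmetry available in the bi-synchronizing case is absent — is the heart of the conjecture.
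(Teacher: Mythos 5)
The statement you are addressing is stated in the paper as a \emph{conjecture}: the paper offers no proof of either assertion, only a suggested strategy (control the function $\coredist$ under taking powers, with the remark that a uniform bound $\coredist(A^m)\le M$ would yield core exponential growth for infinite-order elements) and a partial result (at least linear core growth when some graph of bad pairs $G_r(A)$ has a loop). Your proposal is likewise not a proof, and you are right to present it as a programme with identified obstacles; as such there is no paper proof to compare it against, and the genuine gap is simply that the central implications you isolate -- infinite order forces the synchronizing level of $\min\core(A^i)$ to strictly increase at every step, and forces an accessible circuit in a graph of bad pairs of some power -- remain unproved, exactly as in the paper.

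Two points of detail. First, you describe the inequality $k_i \le |\min\core(A^i)|$ (synchronizing level bounded by the number of states) as open and say the pair-collapse argument only gives a quadratic bound; the paper asserts the linear bound as known, via the collapsing procedure of \cite{BCMNO}: each iterate of that procedure must identify at least two states, and the number of iterates needed to reach a single state equals the minimal synchronizing level, so a core transducer with minimal synchronizing level $i$ has at least $i$ states. That ingredient is therefore available, and it is what the paper uses in its polynomial-lower-bound proposition. Second, where your plan routes everything through Proposition~\ref{infinite=expgrowth1} and the graphs of bad pairs, the paper's own stated strategy is complementary: bound $\coredist(A^m)$ uniformly so that passing to the core loses only boundedly many states, and then import the known exponential growth of the state set of $A^m$ itself. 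Neither route is carried out in the paper, and your identification of the one-way synchronizing case as the hard one is consistent with the paper restricting its partial results (the $\coredist$ bound $\ceil{mk/2}$) to bi-synchronizing elements of $\hn{n}$.
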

 
 Notice that the examples considered above  all satisfy Conjecture~\ref{Conjecture}. A strategy for verifying this conjecture is to show that in reducing to the core we do not lose too many states. To this end we make the following definition:
 
 \begin{definition}
 Let $A$ be a finite synchronous transducer. Then we say has \emph{core distance} $k$ if there is a natural number $k$ such that for any $\Gamma \in X_n^{k}$ and any $q \in A$, $\pi_{A}(\Gamma, q)$ is a state of $\core(A)$. Let $\coredist(A)$ be the minimal $k$ such that $A$ has core distance $k$. If $A= \core(A)$ then $\coredist(A) = 0$.
 \end{definition}
 
 The lemma below explores how the function $\coredist$ behaves under taking products.
 
  \begin{lemma}\label{coredistunderproducts}
  Let $A, B \in \spn{n}$ and let $k_A$ and $k_B$ be  minimal so that $A$ is synchronizing at level $k_A$ and $B$ is synchronizing at level $k_B$. Then $\coredist(A*B) \le k_B$.
  \end{lemma}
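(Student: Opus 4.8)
The plan is to combine the description of $\core(A*B)$ as the set of states reached by long inputs with the hypothesis that $A$ is core. First, by Lemma~\ref{claim-SyncLengthsAdd} the product $A*B$ is synchronizing at level $k_A+k_B$, so $\core(A*B)$ is precisely the image of the corresponding synchronizing map; concretely, a state $t\in Q_A\times Q_B$ lies in $\core(A*B)$ if and only if $t=\pi_{A*B}(W,s)$ for some word $W$ of length $k_A+k_B$ and some (equivalently every) state $s$ of $A*B$.

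Now fix an arbitrary word $\Gamma\in X_n^{k_B}$ and an arbitrary state $(p,q)\in Q_A\times Q_B$; by the definition of $\coredist$ it suffices to show $\pi_{A*B}(\Gamma,(p,q))\in\core(A*B)$. Since $A=\core(A)$, the state $p$ is forced by some word of length $k_A$: choose $\Delta\in X_n^{k_A}$ with $\pi_A(\Delta,p')=p$ for every $p'\in Q_A$. The key computation is then to evaluate $\pi_{A*B}(\Delta\Gamma,(p',q'))$ for an arbitrary state $(p',q')$ using the product rules. Its first coordinate is $\pi_A(\Gamma,\pi_A(\Delta,p'))=\pi_A(\Gamma,p)$. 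For the second coordinate, $\lambda_A(\Delta\Gamma,p')=\lambda_A(\Delta,p')\,\lambda_A(\Gamma,\pi_A(\Delta,p'))=\lambda_A(\Delta,p')\,\lambda_A(\Gamma,p)$, a word of length $k_A+k_B$ whose suffix of length $k_B$ is exactly $\lambda_A(\Gamma,p)$ because $A$ is synchronous; hence, since $B$ is synchronizing at level $k_B$, the second coordinate of $\pi_{A*B}(\Delta\Gamma,(p',q'))$ equals $\mathfrak{s}_B(\lambda_A(\Gamma,p))=\pi_B(\lambda_A(\Gamma,p),q)$. Comparing with the product formula for $\pi_{A*B}(\Gamma,(p,q))$, whose second coordinate is likewise $\pi_B(\lambda_A(\Gamma,p),q)$ (again because $|\lambda_A(\Gamma,p)|=k_B$), we conclude $\pi_{A*B}(\Delta\Gamma,(p',q'))=\pi_{A*B}(\Gamma,(p,q))$. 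As $|\Delta\Gamma|=k_A+k_B$, the first step gives $\pi_{A*B}(\Gamma,(p,q))\in\core(A*B)$, and since $\Gamma$ and $(p,q)$ were arbitrary this establishes $\coredist(A*B)\le k_B$.

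I do not expect a genuine obstacle here: the statement is essentially the observation that, because $A$ is core, any short word $\Gamma$ can be left-padded by a word $\Delta$ of length $k_A$ which resets the $A$-component to the prescribed state $p$ while leaving the length-$k_B$ suffix of the $A$-output untouched, so that the resulting long word reaches exactly the state $\pi_{A*B}(\Gamma,(p,q))$. The only point requiring a little care is the bookkeeping for the second coordinate---tracking that the relevant suffix of $\lambda_A(\Delta\Gamma,p')$ is precisely $\lambda_A(\Gamma,p)$, so that the value of $\pi_B$ on it is forced---but this is immediate from synchronicity of $A$ and the defining property of the synchronizing level of $B$.
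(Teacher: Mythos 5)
Your proof is correct and follows essentially the same route as the paper's: both arguments left-pad the length-$k_B$ input by a word of length $k_A$ that forces the relevant state of $A$ (possible because $A$ is core) and then use that $B$ synchronizes at level $k_B$ on the length-$k_B$ suffix of the $A$-output. The only cosmetic difference is that the paper exhibits a loop of length $k_A+k_B$ at the target state, whereas you show directly that the target lies in the image of the synchronizing map of $A*B$; these are equivalent.
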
 
  \begin{proof}
   Indeed observe that given a state $U$ of $A$ such that the transition $U \stackrel{x|y}{\longrightarrow} V$ for $x,y \in X_n^{k_{B}}$ and $V$ a state of $A$ holds in $A$, then since $U$ is in the core of $A$ (as $A= \core(A)$) there is a word, $z$ of length $k_{A}$ such that there is a loop labelled $z|t'$ based at $U$. Let  $p$ be the state of $B$ forced by $y$. 
   
   Observe that since $A$ is synchronizing at level $k_{A}$, there is a path $V \stackrel{z|t}{\longrightarrow}{U} \stackrel{x|y}{\longrightarrow} V$. Therefore there is a loop labelled $z x|t y$ based at  $V$.  Therefore in $A*B$ there is a loop labelled $z t$ based at $Vp$, since the state of $B$ forced by $y$ is  $p$. Hence for any state $Uq$ of $A*B$, we read an $x$ into a state $Vp$ which is in $\core(A*B)$.
   
   \end{proof}
  
  We have as a corollary:
 
 \begin{lemma}\label{coredistunderpowers}
 Let $A \in \spn{n}$ be synchronizing at level $1$. Let $A^m$ represent the minimal transducer representing the core of $A^{m}$, then $\coredist(A^m*A) \le 1$.
 \end{lemma}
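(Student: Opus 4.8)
The plan is to deduce the statement directly from Lemma~\ref{coredistunderproducts}, treating $A^m$ (which by the notation of the enclosing discussion means $\min\core(A^m)$) as the left factor and $A$ as the right factor. First I would check that both factors genuinely lie in $\spn{n}$, so that the hypotheses of Lemma~\ref{coredistunderproducts} are met. The transducer $\min\core(A^m)$ is core by construction; it is strongly synchronizing --- at level $m$, say --- by $m-1$ applications of Lemma~\ref{claim-SyncLengthsAdd}, and it is minimal, using that the operations of minimising and passing to the core commute. Hence $\min\core(A^m)\in\spn{n}$. The second factor $A$ is by hypothesis an element of $\spn{n}$ which is synchronizing at level $1$, so its minimal synchronizing level $k_A$ satisfies $k_A\le 1$.

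Next I would simply invoke Lemma~\ref{coredistunderproducts} with first factor $\min\core(A^m)$ and second factor $A$. Denoting by $k_B$ the minimal synchronizing level of the right-hand factor $A$, the lemma yields
\[
\coredist\bigl(\min\core(A^m)\ast A\bigr)\le k_B\le 1,
\]
which is exactly the assertion of the lemma.

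The only points needing any care --- and both are routine bookkeeping --- are: (i) verifying that $\min\core(A^m)$ really is an element of $\spn{n}$, so that Lemma~\ref{coredistunderproducts} applies (this is where Lemma~\ref{claim-SyncLengthsAdd} and the commutativity of minimisation with reduction to the core are used); and (ii) observing that the bound in Lemma~\ref{coredistunderproducts} is governed by the synchronizing level of the \emph{right-hand} factor, which here is $A$ with level $1$, and not by the (potentially large) synchronizing level $m$ of the left-hand factor $\min\core(A^m)$. Beyond these observations there is no substantive obstacle; the result is a genuine corollary.
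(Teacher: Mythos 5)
Your proposal is correct and is exactly the derivation the paper intends: the paper states this lemma as an immediate corollary of Lemma~\ref{coredistunderproducts}, applied with left factor $\min\core(A^m)$ (which is core by construction and strongly synchronizing by Lemma~\ref{claim-SyncLengthsAdd}) and right factor $A$, whose minimal synchronizing level is at most $1$. Your two points of care, in particular that the bound depends only on the synchronizing level of the right-hand factor, are precisely the relevant observations.
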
 

Notice that by lemma \ref{core=power} there are elements $A \in \spn{n}$ for which $\coredist{A^m} = 0$ for all $m \in \mathbb{N}$. 

\begin{lemma}
Let $A \in \hn{n}$ by bi-synchronizing at level $k$. Then $\coredist(A ^m) \le \ceil{mk/2}$.
\end{lemma}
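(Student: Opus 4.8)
The plan is to show directly that every path of length $\ell:=\ceil{mk/2}$ in $A^m$ ends in $\core(A^m)$. First I would record that, by Lemma~\ref{claim-SyncLengthsAdd}, $A^m$ is synchronizing at level $mk$, so $\core(A^m)$ is exactly the image of the synchronizing map $\mathfrak{s}^{(m)}\colon \xnl{mk}\to Q_{A}^{m}$; hence it suffices, given a state $(p_1,\dots,p_m)$ of $A^m$ and a word $\Gamma\in\xnl{\ell}$, to exhibit $W\in\xnl{mk}$ with $\mathfrak{s}^{(m)}(W)=\pi_{A^m}(\Gamma,(p_1,\dots,p_m))=:(q_1',\dots,q_m')$. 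The point of the factor $1/2$ is that reading $\ell$ letters already synchronises about half the coordinates of $A^m$: copy $j$ reaches a state depending only on $\Gamma$ once it has processed $jk$ input letters, so the first $\floor{\ell/k}$ coordinates of $(q_1',\dots,q_m')$ are determined by $\Gamma$; dually, since $A\in\hn{n}$ is bi-synchronising, the $A^m$-output $\Gamma^{(m)}$ is again of length $\ell$, and reading it through $A^{-m}$ (using $(q_1',\dots,q_m')\in\core(A^m)\iff(q_1',\dots,q_m')^{-1}\in\core(A^{-m})$) synchronises the last $\floor{\ell/k}$ coordinates. So between the two ends one controls every coordinate.

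Next I would unravel the staircase to get a workable description of $(q_1',\dots,q_m')$. Writing $\Gamma^{(0)}=\Gamma$ and $\Gamma^{(j)}=\lambda_{A}(\Gamma^{(j-1)},p_j)$ one has $q_j'=\pi_A(\Gamma^{(j-1)},p_j)$, and since $\ell\ge k$ (the case $m=1$ is vacuous, as $A$ is core and hence $\coredist(A)=0$) this equals $\mathfrak{s}(\gamma_j)$, where $\gamma_j\in\xnl{k}$ is the last $k$ letters of $\Gamma^{(j-1)}$. The key structural observation is that $A$ being core forces \emph{every} intermediate state appearing here --- in particular the state $\beta_j$ that copy $j$ occupies just before reading the last $k$ letters of its input --- to lie in $Q_{\core(A)}$, because $p_1,\dots,p_m$ already lie in $Q_{\core(A)}$ and transitions stay in the core. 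One then has the chain relations $\gamma_{j+1}=\lambda_A(\gamma_j,\beta_j)$ and $q_j'=\pi_A(\gamma_j,\beta_j)$, so $(q_1',\dots,q_m')$ is encoded by a ``compatible chain'' $\beta_1,\dots,\beta_m\in\core(A)$, $\gamma_1,\dots,\gamma_m\in\xnl{k}$; the same unravelling applied to $A^{-1}$, which is also core and synchronising at level $k$, produces the dual compatible chain from $\Gamma^{(m)}$.

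Finally I would build the witness $W$. Take $W=P\Gamma$ with $|P|=mk-\ell=\floor{mk/2}$. The suffix $\Gamma$ automatically makes the front $\floor{\ell/k}$ coordinates of $\mathfrak{s}^{(m)}(W)$ equal to the corresponding $q_j'$ (the relevant portion of the staircase is replicated verbatim once those copies have synchronised). For the remaining coordinates one must choose the $\floor{mk/2}$ letters of $P$, one block of length $k$ at a time, so that the staircase driven by $W$ threads through exactly the states $\beta_j$ --- equivalently, through the dual chain, read in from the back. Each single such requirement is solvable because its target is a core state of $A$ and $\mathfrak{s}\colon\xnl{k}\to Q_{\core(A)}$ is onto, so a word forcing it can always be prepended; the substance of the lemma is that $\floor{mk/2}$ letters suffice to meet all $m$ requirements at once. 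This is where bi-synchronising is indispensable: the ``front'' requirements are met by forward replication of the $\Gamma$-staircase using $A$, the ``back'' ones by the backward ($A^{-1}$-)staircase on $\Gamma^{(m)}$, and $\ell=\ceil{mk/2}$ is precisely the smallest value for which the two prescriptions together cover all coordinates (with the single middle coordinate, when $m$ is odd, forced by compatibility with the two halves). The delicate point --- and the main obstacle --- is checking that the forward and backward block choices are mutually consistent on the overlapping middle block; once that is in place, everything else is bookkeeping with the recursion $\gamma_{j+1}=\lambda_A(\gamma_j,\beta_j)$.
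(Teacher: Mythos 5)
Your high-level intuition --- the front coordinates of $A^m$ are pinned down by forward synchronization on the suffix $\Gamma$, the back coordinates by applying synchronization of the inverse to the output word --- is the same one that drives the paper's proof. But as a proof the proposal has a genuine gap, and it is exactly the one you flag yourself: you never establish that the ``forward'' and ``backward'' prescriptions for the forcing word $W=P\Gamma$ can be met simultaneously. The block-by-block choice of $P$ is not available in the form you describe: you may only prepend letters to the input of the \emph{first} coordinate, whereas the requirement ``copy $j$ passes through $\beta_j$ just before its last $k$ input letters'' constrains the \emph{output} of copies $1,\dots,j-1$, which during the prefix $P$ is computed from unknown, arbitrary starting states. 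Observing that each single requirement is solvable because $\mathfrak{s}$ is onto does not show that all $m$ of them are simultaneously solvable, and a sketch that defers ``the main obstacle'' to bookkeeping is not a proof. There is also a quantitative symptom: for odd $m$ and $k\ge 2$ your count covers only $2\floor{\ceil{mk/2}/k}<m$ coordinates (e.g.\ $m=3$, $k=2$ gives $1+1=2$ of $3$), and ``forced by compatibility with the two halves'' is not an argument.

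The paper closes precisely this gap with a device absent from your sketch. Write $A^m=A^{\floor{m/2}}*A^{\ceil{m/2}}$ and read the input through a state $UT$: the first factor moves $U\stackrel{\Gamma|\Delta}{\longrightarrow}U'$, and the second factor, being synchronizing at a level not exceeding $|\Delta|$, lands at the state $T'$ \emph{forced} by $\Delta$ regardless of $T$. The key point is that $(U',T')$ lies in $\core(A^m)$: because $A^{\floor{m/2}}$ is bi-synchronizing, $\Delta$ forces the state $(U')^{-1}$ in the inverse, which yields a loop at $U'$ labelled $\Gamma'|\Delta$; prepending a word forcing $U'$ to $\Gamma'$ produces a word that forces $U'$ in the first factor \emph{and} whose first-factor output ends in $\Delta$, hence forces $T'$ in the second. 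This single observation replaces all of your threading through intermediate states $\beta_j$ and makes the consistency question moot. (As a side remark, the argument as written naturally yields the bound $\ceil{m/2}\,k$ rather than $\ceil{mk/2}$; the two differ when $m$ is odd and $k\ge2$, so neither your sketch nor the paper's proof obtains the finer stated bound in that case without further work.)
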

\begin{proof}
First notice that $A^{m} = A^{\floor{m/2}}* A^{\ceil{m/2}}$. Furthermore both $A^{\floor{m/2}}$and $A^{\ceil{m/2}}$ are bi-synchronizing at level $\ceil{m/2}$.

Let $U$ and $V$ be states respectively of $A^{\floor{m/2}}$ and $A^{\ceil{m/2}}$. Let $\Gamma \in X_n^{\ceil{m/2}}$. Suppose we have the transition:
\[
U \stackrel{\Gamma| \Delta}{\longrightarrow} U'.
\]

Since $A^{\floor{m/2}}$ is bi-synchronizing at level $\ceil{m/2}$, then the state of $A^{-\floor{m/2}}$ forced by $\Delta$ is $U'^{-1}$ (the state of $A^{-\floor{m/2}}$ corresponding to $U'$). Therefore there is a loop labelled $\Delta|\Gamma'$ based at $U'^{-1}$ in $A^{-\floor{m/2}}$, hence there is a loop labelled $\Gamma'|\Delta$ based at $U'$ in $A^{\floor{m/2}}$. 

Let $T'$ be the state of $A^{\ceil{m/2}}$ forced by $\Delta$, then $U'T'$ is in $\core(A^m)$. 

Hence we have shown that for any state $T$ of $A^{\ceil{m/2}}$  then the state $UT$ is at most $\ceil{m/2}$ steps from $\core(A^{m})$. Since $U$ was chosen arbitrarily this concludes the proof. 
\end{proof}

Lemma  \ref{core=power} once again shows that the lemma above is an over-estimate in some cases.

If we are able to obtain good bounds on the function $\coredist$ for a given transducer $A \in \hn{n}$ of infinite order, then it is possible to prove core exponential growth. In particular it is not hard to show that if there is an $M \in \mathbb{N}$ such that $\coredist(A^{m}) \le M$ for all $m \in \mathbb{N}$ then  $A$ has core exponential growth rate if it has infinite order.

We have  seen above that there are elements of $\T{P}_{n}$ which attain the maximum core growth rate possible. The proposition below establishes a lower bound for the core growth rate of those elements $A$ of $\T{H}_{n}$ of infinite order such that their graph $G_r$ of bad pairs possesses a loop for some $r \in \mathbb{N}$.

We have the following result:

\begin{proposition}
Let $A \in \T{H}_{n}$ be an element of infinite order, and suppose that the graph $G_r(A)$ of bad pairs of $A$ has a loop for some $r \in \mathbb{N}$. Then $A$ has at least core polynomial growth.
\end{proposition}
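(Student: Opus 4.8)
The plan is to reduce to an initial automaton carrying a self-loop, to extract a free sub-semigroup of the automaton semigroup from the loop in $G_r(A)$ via Proposition~\ref{infinite=expgrowth1}, and then to show that at least a linear (in the exponent) amount of the resulting combinatorial data survives the passage to the core.

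\emph{Reduction.} By Proposition~\ref{infinite=expgrowth1} the loop in $G_r(A)$ forces the automaton semigroup of $A$ to contain a free sub-semigroup of rank $2$ generated by $A_p, A_q$ for two states $p,q\in Q_A$; in particular distinct words over $\{p,q\}$ of equal length induce distinct continuous maps of $X_n^{\N}$. This property is inherited by every power $A^{\ell}$: the states $p^{\ell}$ and $p^{\ell-1}q$ of $A^{\ell}$ induce two maps generating a free rank-$2$ sub-semigroup, since in a free semigroup the sub-semigroup generated by two equal-length words beginning with distinct letters is free. Hence, invoking Corollary~\ref{Reduction to initial automata} together with Lemma~\ref{coregrowthratepowerinvariant}, it suffices to prove the statement after replacing $A$ by a suitable power, so from now on we may also assume $A$ has a state $q_0$ with a loop labelled $x\,|\,x$ for some $x\in X_n$. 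Then $q_0^{m}$ carries the loop $x^{m}\,|\,x^{m}$ in $A^{m}$, hence lies in $\core(A^{m})$; moreover a core transducer is strongly connected (each state is forced by, hence reachable along, some word) and $q_0^m$ is its only state with that loop, so $\core(A^{m})$ is exactly the set of states of $A^{m}$ reachable from $q_0^{m}$, and $|\min\core(A^{m})| = |\min(A^{m}_{q_0^{m}})|$.

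\emph{Producing core states.} For each $m$ I would exhibit at least $m$ pairwise $\omega$-inequivalent states of $\core(A^{m})$. For $j\in\{1,\dots,m\}$ produce a word $\gamma_j$ so that the active state $P_j:=\pi_{A^{m}}(\gamma_j,q_0^{m})$ has its first $j-1$ coordinates equal to $q_0$, its $j$-th coordinate in $\{p,q\}$, and the remaining coordinates whatever the cascade forces. Such $\gamma_j$ exists because $A$ is core, hence strongly connected, so any single coordinate can be steered into $p$ or $q$; reading long blocks of $x$'s leaves the active state at $q_0^{m}$, trailing blocks of $x$'s propagate one coordinate at a time and so pin the coordinates above the $j$-th back down to $q_0$, while the periodic word realising the circuit in $G_r(A)$ is what lets the ``defect'' be planted at a depth $j$ that scales with $m$. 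All the $P_j$ lie in $\core(A^{m})$ by the previous step, they are pairwise distinct in $Q_A^{m}$ (different numbers of leading $q_0$'s), and their pairwise $\omega$-inequivalence should follow from the freeness of $\langle A_p,A_q\rangle$ together with the infinite order of $A$: an identity $P_j\wequal P_{j'}$ ($j<j'$) would, after cancelling the common leading copies of the invertible map $A_{q_0}$, force $A_{q_0}$-conjugation to identify two distinct $\{p,q\}$-patterns, which freeness forbids. This yields $|\min\core(A^{m})|=|\min(A^{m}_{q_0^{m}})|\ge m$, the required at-least-polynomial bound.

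\emph{Main obstacle.} The heart of the argument is the middle step: one must genuinely verify that $\gtrsim m$ of these free-semigroup-flavoured states are reachable from $q_0^{m}$, and that the $\{p,q\}$-data they carry is clean enough for the freeness of $\langle A_p,A_q\rangle$ to separate them. Reducing to the core discards most states of $A^{m}$ and, as the bounds on $\coredist(A^{m})$ earlier in this section show, the core distance of $A^{m}$ may grow linearly in $m$, so only polynomially much of the (exponentially large) free-semigroup data is guaranteed to survive. This is precisely why the conclusion here is ``at least polynomial'' rather than ``exponential''\,---\,obtaining the latter requires the kind of ad hoc analysis carried out above for the element of $\hn{3}$.
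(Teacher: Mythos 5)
Your proposal diverges from the paper's argument and, as you yourself flag, the central step is never actually carried out; the gaps are genuine. First, the construction of $\gamma_j$: to leave coordinates $1,\dots,j-1$ of the active state at $q_0$ you propose to pad with trailing blocks of $x$'s, but a coordinate sitting at $q_0$ outputs $x$ on input $x$, so the very padding that pins coordinate $j-1$ to $q_0$ feeds a long block of $x$'s into coordinate $j$ and drives it to $q_0$ as well. Planting a defect in $\{p,q\}$ at depth exactly $j$ while keeping everything above it at $q_0$ requires that the last $r$ letters of the output of coordinate $j-1$ force $p$ or $q$ while the last $r$ letters of the input to each earlier coordinate force $q_0$, and nothing in the proposal supplies such a word. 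Second, even granting the states $P_j$, their pairwise $\omega$-inequivalence does not follow from the freeness of $\gen{A_p,A_q}$: after cancelling the leading copies of $A_{q_0}$, the putative identity $P_j\wequal P_{j'}$ compares products whose factors include the uncontrolled tail coordinates $w,w'$, which are arbitrary states of $A$ rather than elements of $\{p,q\}$, so the relation you want to contradict does not live inside the free subsemigroup. (The invocation of Proposition~\ref{infinite=expgrowth1} itself is fine: a loop gives a two-element vertex lying on a circuit, so one may take $|\T{S}|=2$.)

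The paper's proof is entirely different and sidesteps all of this. A loop in $G_{r}(A)$ forces the minimal synchronizing level of $\min\core(A^{m})$ to grow linearly in $m$ (Lemma 4.8 of \cite{OlukoyaOrder} together with the definition of the graphs $G_{r}$), and the collapsing procedure of \cite{BCMNO} shows that a transducer with minimal synchronizing level $i$ must have at least $i$ states, since each iteration of the procedure must identify at least two states and the number of iterations needed equals the synchronizing level. Linear growth of the synchronizing level therefore yields the linear lower bound on $|\min\core(A^{m})|$ directly, with no need to track free-semigroup data through the passage to the core. If you wish to salvage your approach, this counting of synchronizing levels is the missing ingredient that replaces both of the problematic steps above.
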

\begin{proof}
By Lemma 4.8 of \cite{OlukoyaOrder} and the definition of the graphs $G_{r}(A)$, the synchronizing level of $A$ grows linearly with powers of $A$ 

By the collapsing procedure see \cite{BCMNO}, a transducer with minimal synchronizing level $i$ must have at least $i$ states, since at each step of this procedure we must be able to perform a collapse.

Therefore we conclude that the core growth rate of $A$ is at least linear in powers of $A$.
\end{proof}

The following lemma controls the drop in the synchronizing level of  a sufficiently large strongly synchronizing automaton when multiplied by a level $1$ synchronizing transducer.

\begin{lemma}
	Let $A \in \pn{n}$ be a core, minimal transducer such that $|A| > n(n+1)$ let $B$ be any transducer synchronizing at level $1$, then $\min\core(AB)$ is synchronizing at level strictly greater than  1.
\end{lemma}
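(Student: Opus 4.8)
The plan is to argue by contradiction: assume $\min\core(AB)$ is synchronizing at level $\le 1$ and deduce $|A|\le n(n+1)$. Since only $\core(B)$ contributes states to $\core(AB)$ and $\core(B)$ has at most $n$ states (being synchronizing at level $1$), we may replace $B$ by $\core(B)$, so that both factors are core; then $f_{\min\core(AB)}=f_A\cdot f_B$ is the composite of the two shift-commuting maps, by the functoriality of $\min\core$ of products on core transducers recalled in the introduction. Write $k$ for the minimal synchronizing level of $A$ (so $k\ge 1$, as $|A|>n(n+1)\ge 6$), set $D\seteq\core(AB)$ and $C\seteq\min\core(AB)$, let $\phi\colon D\to C$ be the minimization morphism (the quotient of $D$ by $\omega$-equivalence), and let $\rho\colon D\to\core(A)=A$ be the projection onto the first coordinate; both are surjective morphisms of transducers. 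Since $C$ is synchronizing at level $1$, for any word $w$ of length $k+1$ with last letter $a$ we have $\phi(\mathfrak{s}_D(w))=\mathfrak{s}_C(a)$.

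\textbf{Reduction to a counting lemma.} Because $A$ is core, $\mathfrak{s}_A\colon \xnl{k}\to Q_A$ is onto. For $a\in\xn{n}$ put $R_a\seteq\{\mathfrak{s}_A(w) : |w|=k,\ w \text{ ends in } a\}$; a direct computation with the product transition/output formulas shows that $R_a\subseteq\rho\bigl(\phi^{-1}(\mathfrak{s}_C(a))\bigr)$, i.e. every element of $R_a$ occurs as the first coordinate of some state of $D$ lying in the single $\omega$-equivalence class $\phi^{-1}(\mathfrak{s}_C(a))$. Moreover $\bigcup_{a\in\xn{n}}R_a=\im(\mathfrak{s}_A)=Q_A$, so $|A|\le\sum_{a\in\xn{n}}|R_a|$. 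Hence it suffices to prove the \textbf{Key Lemma}: $|R_a|\le n+1$ for every $a\in\xn{n}$. Granting this, $|A|\le\sum_{a\in\xn{n}}|R_a|\le n(n+1)$, contradicting $|A|>n(n+1)$, and the theorem follows. All of this is routine bookkeeping with Lemma~\ref{claim-SyncLengthsAdd} and the product formulas; the content is in the Key Lemma.

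\textbf{Proof sketch of the Key Lemma (and the main obstacle).} Fix $a$. All states of $D$ in the class $\phi^{-1}(\mathfrak{s}_C(a))$ induce one and the same continuous map $g$, and for each $p\in R_a$ there is a state $q(p)\in\core(B)$ with $(p,q(p))$ in that class, so the map induced by $p$ in $A$ followed by the map induced by $q(p)$ in $B$ equals $g$. Now $B$ is synchronizing at level $1$, so the state of $B$ reached after reading one letter depends only on that letter; consequently the map induced by any $q\in\core(B)$ is determined by its action on the first letter together with a universal ``tail'' map independent of $q$, and $\core(B)$ yields at most $n$ such maps. Feeding this into the equations above, together with (i) the minimality of $A$ --- distinct states of $A$ induce distinct maps, and two such maps can disagree only in their first $k$ output coordinates, since $A$ is synchronizing at level $k$ --- and (ii) the fact that $f_A$ is a homeomorphism, which prevents too many of these maps from becoming equal after post-composition by the $\le n$ maps coming from $\core(B)$, one bounds the number of $p\in R_a$ compatible with a fixed $g$ by $n+1$. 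The main obstacle is exactly this last step: since $B$ need not be invertible, the $\le n$ maps induced by $\core(B)$ can be non-injective, and one must carefully match their bounded ``first-letter'' ambiguity against the boundary behaviour of the first-coordinate maps of $A$ and extract the homeomorphism hypothesis on $f_A$ in a usable, quantitative form; everything else in the argument is formal manipulation of the synchronization data.
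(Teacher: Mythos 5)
Your reduction is essentially the paper's: since $A$ is core, your set $R_a$ coincides with $\mathcal{I}_a := \{\pi_A(a,p) : p \in Q_A\}$, and both arguments rest on the decomposition $Q_A = \bigcup_{a \in X_n} \mathcal{I}_a$ together with a bound of roughly $n+1$ on each piece. The problem is that you never prove your Key Lemma: you state it, sketch a route through the continuous maps induced by the states of $\core(B)$, and then explicitly flag the decisive counting step as ``the main obstacle''. As written, the proposal is therefore not a proof; everything before the Key Lemma is bookkeeping, and the Key Lemma is exactly the content of the statement.

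The missing idea is far more elementary than the quantitative matching against $f_A$ that you describe, and it never analyses the maps induced by $\core(B)$ at all. If $|\mathcal{I}_a| \ge n+1$, pigeonhole on the \emph{output letter} of the transitions reading $a$: there are distinct states $p_1 \ne p_2$ of $A$ and distinct sources $p_1', p_2'$ with $p_1' \xrightarrow{a|j} p_1$ and $p_2' \xrightarrow{a|j} p_2$ for one and the same $j \in X_n$. Lifting to $AB$, the core contains states $(p_1', q_1')$ and $(p_2', q_2')$, and since $B$ is synchronizing at level $1$ both transitions land on states whose $B$-coordinate is the single state $q_j$ forced by $j$; so reading the single letter $a$ from two core states of $AB$ reaches $(p_1, q_j)$ and $(p_2, q_j)$. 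Synchronization of $\min\core(AB)$ at level $1$ would force $(p_1,q_j) \wequal (p_2,q_j)$, and the paper concludes by asserting that this forces $p_1 \wequal p_2$, contradicting minimality of $A$. (Your worry about non-invertible $B$ is legitimate --- that final implication implicitly needs $B_{q_j}$ to separate the images of $A_{p_1}$ and $A_{p_2}$, a point the paper passes over --- but the pigeonhole on output letters is the step your sketch lacks, and without it, or a worked-out substitute, the Key Lemma and hence the proposal is incomplete.)
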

\begin{proof}
	For each $ i \in X_n$ let $\T{I}_{i} := \{ \pi_{A}(i,p) | p \in Q_A \}$.  Notice since $A$ is strongly synchronizing and core it is also strongly connected, therefore for all $p \in Q_A$ there is a set $\T{I}_{i}$ for some $i \in X_n$ such that $p \in \T{I}_{i}$. It now follows that $\cup_{i \in X_n}\T{I}_{i} = Q_A$.
	
	Now if $|\T{I}_{i}| < n +1$ for all $i$ then:
	\[
	|A| = |\cup_{i \in X_n}\T{I}_{i}| \le  \sum_{i=1}^{n}|\T{I}_{i}| < n*(n+1) < |A|
	\]
	
	which is a contradiction. Therefore there must be an $i \in X_n$ such that $|\T{I}_{i}| > n+1$. Fix such an $i \in X_n$.
	
	Now since $|\T{I}_{i}| > n+1$, there must be states $p_1', p_2', p_1, p_2 \in Q_A$ such that $p_1 \ne p_2$ and $p_1' \ne p_2'$ and such that the following transitions are valid:
	
	\[
	p_1' \stackrel{i|j}{\longrightarrow}{p_1} \qquad p_2' \stackrel{i|j}{\longrightarrow}{p_2}
	\] 
	for some $j \in X_n$. 
	
	Now observe that there are states $(p_1', q_1')$ and $(p_2', q_2')$ in the core of $AB$ where $q_1$ and $q_2$ are states of $B$. Let $\pi_B(j, q_1') = q_j$ and $\pi_B(j, q_2') = q_j$ (since $B$ is synchronizing at level 1).
	
	Therefore the following transitions are valid:
	
	\[
	(p_1', q_1') \stackrel{i| l_1}{\longrightarrow}(p_1, q_j) \qquad (p_2', q_2') \stackrel{i| l_1}{\longrightarrow}(p_2, q_j) 
	\]
	
	where $l_1 = \lambda_{B}(j, q_1')$ an $l_2 = \lambda_{B}(j, q_2')$ Now if $\min \core(AB)$ is synchronizing at level 1, then $(p_1, q_j)$ an $(p_2, q_j)$ would be $\omega$-equivalent, since $(p_1', q_1')$ and $(p_2', q_2')$ are states in the core of $AB$. However $(p_1, q_j) \wequal (p_2, q_j)$ implies that $p_1 \wequal p_2$, but by assumption $p_1$ and $p_2$ are distinct and $A$ is minimal and so $p_1 \wequal p_2$ is a contradiction.
	
	Therefore $\min \core (AB)$ is not synchronizing at level 1.
	
\end{proof}

\subsection*{Acknowledgements}
The author wishes to acknowledge support from EPSRC research grant EP/R032866/1 and Leverhulme Trust Research Project Grant RPG-2017-159.

\def\cprime{$'$}
\providecommand{\bysame}{\leavevmode\hbox to3em{\hrulefill}\thinspace}
\providecommand{\MR}{\relax\ifhmode\unskip\space\fi MR }
\providecommand{\MRhref}[2]{%
  \href{http://www.ams.org/mathscinet-getitem?mr=#1}{#2}
}
\providecommand{\href}[2]{#2}


\end{document}